\newtheorem{conj}{Conjecture}
\newtheorem{thm}{Theorem}[section]
\newtheorem{cor}[thm]{Corollary}
\newtheorem{lem}[thm]{Lemma}
\newtheorem{prop}[thm]{Proposition}
\theoremstyle{definition}
\theoremstyle{remark}
\numberwithin{equation}{section}
\newcommand{\bC}{\mathbb{C}}
\newcommand{\bP}{\mathbb{P}}
\newcommand{\bQ}{\mathbb{Q}}
\newcommand{\bZ}{\mathbb{Z}}
\newcommand\lra{\longrightarrow}
\newcommand\trf{\mathrm{trf}}
\newcommand{\CircNum}[1]{\ooalign{\hfil\raise .00ex\hbox{\scriptsize #1}\hfil\crcr\mathhexbox20D}}
\newcommand{\Sym}{\mathrm{Sym}}
\mathchardef\ordinarycolon\mathcode`\:
\title[Cohomology of $\mathrm{Aut}(F_n)$]{The stable cohomology of automorphisms of free groups with coefficients in the homology representation}
\author{Oscar Randal-Williams}
\thanks{The author was supported by ERC Advanced Grant No.\ 228082, and the Danish National Research Foundation through the Centre for Symmetry and Deformation.}
\address{Institut for Matematiske Fag\\
Universitetsparken 5\\
DK-2100 K{\o}benhavn {\O}\\
Denmark
}
\email{o.randal-williams@math.ku.dk}
\subjclass[2010]{20F28, 20J06, 57R20}
\keywords{Automorphisms of free groups, homology stability}
\begin{document}

\begin{abstract}
We study the cohomology of $\mathrm{Aut}(F_n)$ and $\mathrm{Out}(F_n)$ with coefficients in the modules $\wedge^q H$, $\wedge H^*$, $\Sym^q H$ or $\Sym^q H^*$, where $H$ is the $\mathrm{Out}(F_n)$-module obtained by abelianising the free group $F_n$. For reasons which are not conceptually clear, taking coefficients in $H$ and its related modules behaves in a far less trivial way than taking coefficients in $H^*$ and its related modules. Based on a conjectural homology stability theorem for spaces of graphs labeled by a simply connected background space, we give a stable integral calculation of these groups in low degrees, and modulo a further conjecture a stable rational calculation in all degrees.
\end{abstract}
\maketitle

\section{An observation regarding homology stability}

Recently, Galatius \cite{galatius-2006} has proved the remarkable theorem that the two natural homomorphisms
$$\Sigma_n \lra \mathrm{Aut}(F_n) \lra \mathrm{Out}(F_n)$$
induce homology isomorphisms in degrees $2* \leq n - 4$ with integral coefficients. His approach is to model $B\mathrm{Out}(F_n)$ as the space $\mathcal{G}_n$ of graphs of the homotopy type of $\vee^n S^1$, and $B\mathrm{Aut}(F_n)$ as the space $\mathcal{G}^1_n$ of pointed graphs of the same homotopy type. He then produces a natural map from such spaces of graphs to the infinite loop space $Q_0(S^0)$, which he shows has a certain homological connectivity.

At the same time, Satoh \cite{Satoh, Satoh2} has studied the low dimensional homology of $\mathrm{Aut}(F_n)$ and $\mathrm{Out}(F_n)$ with coefficients in the module $H$ given by the abelianisation of $F_n$, and cohomology with coefficients in the dual module $H^*$. His methods are those of combinatorial group theory, and proceed by calculation with a presentation of these groups. From now on, we write $H_\bQ$ and $H_\bQ^*$ for the rationalised $\mathrm{Out}(F_n)$-modules.

The groups $\mathrm{Aut}(F_n)$ and $\mathrm{Out}(F_n)$ fit into a more general family of groups denoted $\Gamma_{n,s}$ by Hatcher--Vogtmann \cite{HV}, where $\mathrm{Aut}(F_n) = \Gamma_{n, 1}$ and $\mathrm{Out}(F_n) = \Gamma_{n, 0}$. Classifying spaces for these may be taken to be the spaces $\mathcal{G}_n^s$ of graphs of the homotopy type of $\vee^n S^1$ equipped with $s$ distinct ordered marked points.

Hatcher and Vogtmann \cite{HV} prove that the map $\mathcal{G}_n^s \to \mathcal{G}_{n+1}^{s}$ (defined for $s > 0$) that adds a loop induces an integral homology isomorphism in degrees $2* \leq n-2$ (and induces a rational homology isomorphism in degrees $5* \leq 4n-10$). Furthermore, the map $\mathcal{G}_n^s \to \mathcal{G}_n^{s-1}$ that forgets a marked point is an integral homology isomorphism in degrees $2* \leq n-3$ (or $2* \leq n-4$ if it is the last marked point).

We can draw an immediate observation regarding homology with coefficients in $H$ from the homology stability theorem. There is an extension $F_n \to \mathrm{Aut}(F_n) \to \mathrm{Out}(F_n)$, and the corresponding Leray--Hochschild--Serre spectral sequence has two rows. However, as the projection is a homology equivalence in a range of degrees we deduce
\begin{prop}
The groups $H^*(\mathrm{Out}(F_n);H^*)$ are zero for $2* \leq n-6$.
\end{prop}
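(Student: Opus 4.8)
The plan is to run the Lyndon--Hochschild--Serre spectral sequence of the extension $F_n \to \mathrm{Aut}(F_n) \to \mathrm{Out}(F_n)$ recorded above, in cohomology with trivial integral coefficients:
$$E_2^{p,q} = H^p\bigl(\mathrm{Out}(F_n); H^q(F_n;\bZ)\bigr) \Longrightarrow H^{p+q}(\mathrm{Aut}(F_n);\bZ).$$
Since $F_n$ has cohomological dimension one, only the rows $q=0,1$ are nonzero, so this is the two-row spectral sequence alluded to above. The bottom row is $H^p(\mathrm{Out}(F_n);\bZ)$, while the coefficient system in the top row is $H^1(F_n;\bZ) = \mathrm{Hom}(H,\bZ) = H^*$ with its induced $\mathrm{Out}(F_n)$-action; thus the top row is exactly the group $H^p(\mathrm{Out}(F_n);H^*)$ that we wish to show vanishes.

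Because there are only two rows, the spectral sequence collapses to a single long exact sequence
$$\cdots \to H^{p+1}(\mathrm{Aut}(F_n);\bZ) \xrightarrow{\partial} H^p(\mathrm{Out}(F_n);H^*) \xrightarrow{d_2} H^{p+2}(\mathrm{Out}(F_n);\bZ) \xrightarrow{\;e\;} H^{p+2}(\mathrm{Aut}(F_n);\bZ) \to \cdots,$$
whose edge homomorphism $e$ is the map induced by the projection $\mathrm{Aut}(F_n) \to \mathrm{Out}(F_n)$. The input I would feed in is Galatius' theorem: the projection is an integral homology isomorphism in degrees $2* \leq n-4$, and (as is standard for such stability statements) a surjection one degree beyond. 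Dualising via the universal coefficient theorem, $e$ is then an isomorphism on integral cohomology in degrees $* \leq (n-4)/2$ and a monomorphism in degree $(n-4)/2 + 1$, the monomorphism in this top degree being precisely where the homological surjection is used.

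It then remains to chase the long exact sequence for $p$ with $2p \leq n-6$, so that $p+1$ falls in the isomorphism range and $p+2$ in the monomorphism range of $e$. Surjectivity of $e$ in degree $p+1$ forces the map $\partial$ into $H^p(\mathrm{Out}(F_n);H^*)$ to vanish, so exactness makes $d_2$ injective on this group; injectivity of $e$ in degree $p+2$ gives $\ker e = 0$ there, so exactness makes $d_2$ out of this group zero. A map that is at once injective and zero has trivial domain, whence $H^p(\mathrm{Out}(F_n);H^*)=0$ throughout this range. The step I expect to be delicate is pinning down the sharp constant: obtaining $n-6$ rather than $n-8$ hinges on the monomorphism of $e$ in cohomological degree $(n-4)/2+1$, which relies on the epimorphism clause of homological stability together with careful universal-coefficient bookkeeping of the $\mathrm{Ext}$-terms over $\bZ$; one should also double-check that the edge map genuinely is the projection-induced map and that the surviving coefficient module is $H^*$, not $H$.
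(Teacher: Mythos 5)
Your argument is exactly the paper's (the paper disposes of this in one sentence: the two-row Lyndon--Hochschild--Serre spectral sequence of $F_n \to \mathrm{Aut}(F_n) \to \mathrm{Out}(F_n)$ plus the fact that the projection is a homology equivalence in degrees $2* \leq n-4$), and your Wang-sequence chase fills in the details correctly, including the identification of the top row as $H^p(\mathrm{Out}(F_n);H^*)$ and of the edge map as inflation. Your closing caveat is well taken: the stated range $2* \leq n-6$ does require the epimorphism one degree beyond the homological isomorphism range (otherwise one only gets $2* \leq n-8$), a point the paper glosses over.
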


Similarly, there is a fibration with section $\vee^n S^1 \to \mathcal{G}_n^2 \to \mathcal{G}_n^1$ and the projection map is a homology equivalence in a range of degrees, so
\begin{prop}\label{prop:AutFnHomologyRep}
The groups $H^*(\mathrm{Aut}(F_n);H^*)$ are zero for $2* \leq n-4$.
\end{prop}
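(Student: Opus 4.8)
The plan is to run the cohomology Serre spectral sequence of the fibration $\vee^n S^1 \to \cG_n^2 \to \cG_n^1$ and to exploit its section. Because the fibre $\vee^n S^1$ has reduced cohomology concentrated in degree $1$, where $H^1(\vee^n S^1)\cong H^*$ as an $\mathrm{Aut}(F_n)$-module, the $E_2$-page has exactly two nonzero rows,
\[
E_2^{p,0}=H^p(\mathrm{Aut}(F_n);\bZ),\qquad E_2^{p,1}=H^p(\mathrm{Aut}(F_n);H^*),
\]
abutting to $H^{p+q}(\cG_n^2;\bZ)$, and the only possibly nonzero differential is $d_2\colon E_2^{p,1}\to E_2^{p+2,0}$.

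First I would feed in the section $s\colon \cG_n^1\to\cG_n^2$ of the projection $\pi$. Since $s^*\circ\pi^*=\mathrm{id}$, the edge homomorphism $\pi^*\colon H^*(\mathrm{Aut}(F_n);\bZ)\to H^*(\cG_n^2;\bZ)$ is split injective in every degree; as this edge map factors as $E_2^{p,0}\twoheadrightarrow E_\infty^{p,0}\hookrightarrow H^p(\cG_n^2;\bZ)$, the surjection onto $E_\infty^{p,0}$ must be injective, forcing the image of the incoming differential to vanish. Thus $d_2=0$ identically, $E_2=E_\infty$, and the filtration splits (again by the section) into a natural isomorphism
\[
H^m(\cG_n^2;\bZ)\cong H^m(\mathrm{Aut}(F_n);\bZ)\oplus H^{m-1}(\mathrm{Aut}(F_n);H^*),
\]
in which $\pi^*$ is precisely the inclusion of the first summand. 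In particular $H^{m-1}(\mathrm{Aut}(F_n);H^*)$ is identified with $\Coker\bigl(\pi^*\colon H^m(\mathrm{Aut}(F_n);\bZ)\to H^m(\cG_n^2;\bZ)\bigr)$.

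Next I would invoke homology stability. The projection $\pi\colon\cG_n^2\to\cG_n^1$ forgets one of two marked points (not the last), so by the Hatcher--Vogtmann theorem quoted above it is an integral homology isomorphism for $2*\le n-3$, and hence $\pi^*$ is an isomorphism on integral cohomology in the same range. Comparing with the splitting, its cokernel vanishes, so $H^{m-1}(\mathrm{Aut}(F_n);H^*)=0$ throughout the stable range, which is the assertion.

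The hard part will be bookkeeping the precise range, and in particular reconciling an integral homology statement with cohomology in the dual module $H^*$. I would handle this by the adjunction $\mathrm{Hom}_{\bZ[\mathrm{Aut}(F_n)]}(P_*,H^*)\cong\mathrm{Hom}_{\bZ}(P_*\otimes_{\bZ[\mathrm{Aut}(F_n)]}H,\bZ)$, which identifies $H^*(\mathrm{Aut}(F_n);H^*)$ with the cohomology of the $\bZ$-dual of the chain complex computing $H_*(\mathrm{Aut}(F_n);H)$; since $H\cong\bZ^n$ is $\bZ$-free, the algebraic universal coefficient theorem then relates the two through $\mathrm{Hom}(-,\bZ)$ and $\mathrm{Ext}(-,\bZ)$ terms. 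The homological form of the splitting above shows that $H_{m-1}(\mathrm{Aut}(F_n);H)$ is exactly the kernel of $\pi_*$ on $H_m$, so homology stability makes these groups vanish in a range; the delicate point is that a naive degree count loses one unit, and extracting the sharp bound $2*\le n-4$ requires the injectivity half of Hatcher--Vogtmann stability in the degree just beyond the isomorphism range.
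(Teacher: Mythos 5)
Your argument is exactly the paper's: the paper proves this proposition in one line by citing the section of the fibration $\vee^n S^1 \to \mathcal{G}_n^2 \to \mathcal{G}_n^1$ (forcing the two-row Serre spectral sequence to collapse and split) together with the Hatcher--Vogtmann theorem that forgetting a marked point is a homology equivalence in a range, and you have simply written out the details it leaves implicit. Your closing worry about reconciling the homology-isomorphism range with the stated cohomological bound $2*\leq n-4$ is fair, but the paper itself does not address this off-by-one bookkeeping.
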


More generally, the map $\mathcal{G}_n^{k+1} \to \mathcal{G}_n^1$ has fibre $(\vee^n S^1)^k$ and $\mathrm{Aut}(F_n)$ acts on its homology diagonally. Thus for $k=2$ the Serre spectral sequence has three rows, with
$$E_2^{*, 1} = H^*(\mathrm{Aut}(F_n); H^* \oplus H^*) = 0 \quad \text{for $2* \leq n-4$ by Proposition \ref{prop:AutFnHomologyRep}}$$
and
$$E_2^{*, 2} = H^*(\mathrm{Aut}(F_n); H^* \otimes H^*).$$
Using the fact that the projection map is a homology equivalence in degrees $2* \leq n-2$, we deduce that $H^*(\mathrm{Aut}(F_n); H^* \otimes H^*)=0$ for $2* \leq n - 8$. Continuing in this way for higher $k$, we establish the following proposition.
\begin{prop}\label{prop:VanishTensorPowers}
For all $k \geq 1$, $H^*(\mathrm{Aut}(F_n); (H^*)^{\otimes k})$ is zero for $2* \leq n - 4k$.
\end{prop}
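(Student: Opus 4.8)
The plan is to induct on $k$. The case $k=1$ is precisely Proposition~\ref{prop:AutFnHomologyRep}, so I take it as the base of the induction and assume the statement for all smaller values in the inductive step. The engine is the cohomological Serre spectral sequence of the fibration
\[
(\vee^n S^1)^k \lra \mathcal{G}_n^{k+1} \stackrel{\pi}{\lra} \mathcal{G}_n^1
\]
that forgets all but one marked point. Two features of this fibration will do the work: the fibre has cohomology only in degrees $0 \le q \le k$, and $\pi$ has a section (introduce the extra marked points in a standard way), so that $\pi^*$ is split injective. Because $\mathrm{Aut}(F_n)$ acts diagonally on the fibre, the K\"unneth theorem gives $H^q((\vee^n S^1)^k) \cong \bigoplus^{\binom{k}{q}}(H^*)^{\otimes q}$ as a module, and hence
\[
E_2^{p,q} = H^p\big(\mathrm{Aut}(F_n);(H^*)^{\otimes q}\big)^{\oplus\binom{k}{q}}, \qquad 0 \le q \le k,
\]
so that the group to be killed is the corner term $E_2^{p,k} = H^p(\mathrm{Aut}(F_n);(H^*)^{\otimes k})$.

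Fix $p$ with $2p \le n-4k$. First I would show $E_\infty^{p,k} = E_2^{p,k}$ by checking that every differential meeting this term vanishes. Nothing enters $E^{p,k}$, as the source would sit in a row $q>k$ where the fibre has no cohomology. The differentials leaving it are $d_r\colon E_r^{p,k} \to E_r^{p+r,k-r+1}$ for $2 \le r \le k+1$. For $2 \le r \le k$ the target is a subquotient of $H^{p+r}(\mathrm{Aut}(F_n);(H^*)^{\otimes(k-r+1)})^{\oplus\binom{k}{k-r+1}}$, which by the inductive hypothesis vanishes once $2(p+r) \le n-4(k-r+1)$, i.e.\ $2p \le n-4k+(2r-4)$; since $r \ge 2$ this follows from our hypothesis, the constraint being tight exactly at $r=2$, which is what forces the range $n-4k$. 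The one remaining differential $d_{k+1}$ lands in the base row $E^{p+k+1,0}$, where the inductive hypothesis says nothing; this is where the section is needed, for split injectivity of $\pi^*$ makes $E_2^{*,0} \to E_\infty^{*,0}$ an isomorphism, so no differential can hit row $0$ and $d_{k+1}=0$. Hence $E_\infty^{p,k} = E_2^{p,k}$.

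It then remains to show $E_\infty^{p,k}=0$, and here homology stability enters. The map $\pi$ factors as a composite of point-forgetting maps $\mathcal{G}_n^s \to \mathcal{G}_n^{s-1}$ with $s \ge 2$, each an integral homology isomorphism in degrees $2* \le n-3$ and hence, by universal coefficients, a cohomology isomorphism there; so $\pi^*$ is an isomorphism in degree $p+k$ provided $2(p+k) \le n-3$. As $n-4k \le n-2k-3$ precisely when $k \ge 2$, our hypothesis supplies this, and the surjectivity of $\pi^*$ in degree $p+k$ forces $E_\infty^{p',q'}=0$ for all $p'+q'=p+k$ with $q'>0$; in particular $E_\infty^{p,k}=0$. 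Together with the previous paragraph this yields $E_2^{p,k}=0$, completing the induction. I expect the main obstacle to be the differential $d_{k+1}$ into the base row: it is invisible to the inductive vanishing and can only be removed via the section, so the fibration must genuinely be used as a fibration-with-section rather than merely through its stability range. The one remaining thing to verify is that the two range demands, $2p\le n-4k$ for the inductive step and $2(p+k)\le n-3$ for the stability input, are simultaneously satisfiable; this is exactly the inequality $k\ge 2$, the case $k=1$ being the base case.
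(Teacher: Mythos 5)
Your argument is correct and is essentially the paper's own proof: the same Serre spectral sequence for $(\vee^n S^1)^k \to \mathcal{G}_n^{k+1}\to\mathcal{G}_n^1$, with induction on $k$, the lower rows killed by the inductive hypothesis and the $E_\infty$-page killed by the stability range of the point-forgetting maps. You have merely made explicit the bookkeeping of differentials and ranges that the paper compresses into ``continuing in this way for higher $k$''.
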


We are interested in studying the coefficient systems given by exterior powers $\wedge^q H$ and $\wedge^q H^*$, and symmetric powers $\Sym^q H$ and $\Sym^q H^*$. After rationalising, these modules are summands of $H_\bQ^{\otimes q}$ and $(H^*_\bQ)^{\otimes q}$ respectively, from which we deduce the following corollary.
\begin{cor}\label{cor:VanishingExtPowers}
For $q \geq 1$, the group $H^*(\mathrm{Aut}(F_n); \wedge^qH^*_\bQ)$ is zero for $2* \leq n - 4q$, as is $H^*(\mathrm{Aut}(F_n); \Sym^qH^*_\bQ)$.
\end{cor}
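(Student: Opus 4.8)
The plan is to deduce the statement directly from Proposition \ref{prop:VanishTensorPowers} by exhibiting $\wedge^q H^*_\bQ$ and $\Sym^q H^*_\bQ$ as direct summands of the tensor power $(H^*_\bQ)^{\otimes q}$ in the category of $\mathrm{Aut}(F_n)$-modules, and then using that group cohomology carries such summands to summands.

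First I would record the splitting. The symmetric group $\Sigma_q$ acts on $(H^*_\bQ)^{\otimes q}$ by permuting the tensor factors, and because $\mathrm{Aut}(F_n)$ acts diagonally this action commutes with the $\mathrm{Aut}(F_n)$-action, i.e.\ it is by $\mathrm{Aut}(F_n)$-module automorphisms. Since we work over $\bQ$, the idempotents
$$e_{\mathrm{sym}} = \frac{1}{q!}\sum_{\sigma \in \Sigma_q}\sigma, \qquad e_{\wedge} = \frac{1}{q!}\sum_{\sigma \in \Sigma_q}\mathrm{sgn}(\sigma)\,\sigma$$
lie in $\bQ[\Sigma_q]$ and are therefore $\mathrm{Aut}(F_n)$-equivariant; their images are exactly $\Sym^q H^*_\bQ$ and $\wedge^q H^*_\bQ$. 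Thus, writing $M$ for either functor applied to $H^*_\bQ$, there are $\mathrm{Aut}(F_n)$-module maps $M \hookrightarrow (H^*_\bQ)^{\otimes q} \twoheadrightarrow M$ whose composite is the identity.

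Next, applying $H^*(\mathrm{Aut}(F_n); -)$, which is functorial in the coefficient module, these retraction data give for each degree maps $H^*(\mathrm{Aut}(F_n); M) \to H^*(\mathrm{Aut}(F_n); (H^*_\bQ)^{\otimes q}) \to H^*(\mathrm{Aut}(F_n); M)$ composing to the identity, so $H^*(\mathrm{Aut}(F_n); M)$ is a direct summand of $H^*(\mathrm{Aut}(F_n); (H^*_\bQ)^{\otimes q})$. To identify the target I would note that $\mathrm{Aut}(F_n)$ is of finite type, so rational cohomology commutes with rationalising the coefficients, giving $H^*(\mathrm{Aut}(F_n); (H^*_\bQ)^{\otimes q}) \cong H^*(\mathrm{Aut}(F_n); (H^*)^{\otimes q}) \otimes \bQ$. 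By Proposition \ref{prop:VanishTensorPowers} with $k = q$ this vanishes for $2* \leq n - 4q$, and hence so does each summand $H^*(\mathrm{Aut}(F_n); \wedge^q H^*_\bQ)$ and $H^*(\mathrm{Aut}(F_n); \Sym^q H^*_\bQ)$.

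There is no serious obstacle here: the only points needing care are the $\mathrm{Aut}(F_n)$-equivariance of the idempotents and the commutation of rationalisation with cohomology, both of which are routine consequences of working in characteristic zero and of finite type, but which are precisely what licence the reduction to Proposition \ref{prop:VanishTensorPowers}.
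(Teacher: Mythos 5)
Your proposal is correct and follows essentially the same route as the paper, which deduces the corollary from Proposition \ref{prop:VanishTensorPowers} precisely by observing that, after rationalising, $\wedge^q H^*_\bQ$ and $\Sym^q H^*_\bQ$ are $\mathrm{Aut}(F_n)$-equivariant direct summands of $(H^*_\bQ)^{\otimes q}$. You have simply made explicit the standard details (the idempotents in $\bQ[\Sigma_q]$ and the compatibility of rationalisation with group cohomology) that the paper leaves implicit.
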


These observations are no doubt known to the experts. The purpose of this note is to outline several related calculations one can make, modulo certain conjectures, about the ``dual" calculation, i.e.\ that of the groups $H^*(\mathrm{Aut}(F_n); \wedge^qH_\bQ)$ and $H^*(\mathrm{Aut}(F_n); \Sym^qH_\bQ)$. We perform all calculations in cohomology; the dual statements about homology may be recovered via the universal coefficients theorem for coefficient modules.

\section{Homology stability with coefficient systems}

By Galatius' theorem, the groups $\mathrm{Aut}(F_n)$ are closely related to the symmetric groups, but also share many properties with mapping class groups of surfaces. These three families of groups are known to exhibit homological stability for integral homology, but symmetric groups and mapping class groups also exhibit homological stability for certain systems of coefficients, those of ``finite degree", a notion that is originally due to Dwyer \cite{DwyerHomStab} in his study of homological stability for general linear groups with coefficient systems.

\begin{conj}\label{conj:StabilityWithCoefficients}
If $V$ is a coefficient system of degree $\leq k$, then $H_*(\mathrm{Aut}(F_n);V) \to H_*(\mathrm{Aut}(F_{n+1});V)$ induces an isomorphism in degrees $2* \leq n-k-2$.
\end{conj}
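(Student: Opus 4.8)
The plan is to adapt the now-standard machinery for homological stability with polynomial (finite-degree) coefficients, developed by Dwyer \cite{DwyerHomStab} and van der Kallen for general linear groups and subsequently axiomatised for families of automorphism groups, to the family $\mathrm{Aut}(F_n)$. First I would organise the coefficient systems categorically: one regards a coefficient system as a functor $V$ from a category $\mathcal{F}$ whose objects are the finitely generated free groups and whose morphisms encode the stabilisation maps $F_n \hookrightarrow F_n * \bZ = F_{n+1}$, to abelian groups. The degree filtration is then defined inductively via the suspension functor $(\Sigma V)(F_n) = V(F_{n+1})$ together with the natural transformation $V \to \Sigma V$: writing $\Delta V$ for its cokernel, one declares $V$ to have degree $\leq k$ when $\Delta V$ has degree $\leq k-1$ (with degree $\leq -1$ meaning $V = 0$). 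The utility of this definition is precisely that $\Delta V$ has strictly smaller degree, which sets up an induction.

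Second, I would run a double induction: an outer induction on the degree $k$, and an inner induction on the homological degree $*$. The base case $k = 0$ amounts to the constant coefficient system, where the claimed range $2* \leq n-2$ is exactly the Hatcher--Vogtmann integral homology stability bound recalled in Section 1 (a reassuring consistency check). For the inductive step one exploits the short exact sequence of coefficient systems relating $V$, $\Sigma V$ and $\Delta V$, together with the associated long exact sequence in group homology, so that control of the stabilisation map for the lower-degree system $\Delta V$ feeds into control for $V$ itself.

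The geometric input is a highly connected simplicial complex (or semi-simplicial set) on which $\mathrm{Aut}(F_n)$ acts, with the property that the stabiliser of a $p$-simplex is again the automorphism group of a free group of rank roughly $n-p$ (up to a semidirect factor), so that $V$ restricts to a coefficient system of the same or smaller degree on each stabiliser. The natural candidates are Hatcher--Vogtmann's spaces of free splittings and partial bases, whose connectivity estimates already appear in the literature. The spectral sequence of this action, compared between ranks $n$ and $n+1$, then expresses the relative homology $H_*(\mathrm{Aut}(F_{n+1}), \mathrm{Aut}(F_n); V)$ in terms of the homology of stabilisers with the restricted, suspended and deviated coefficients, and the two inductive hypotheses are arranged so that every contributing term vanishes in the range $2* \leq n-k-2$.

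The main obstacle I expect is geometric rather than homological-algebraic, and it has two parts. First, one must pin down the precise connectivity bound for the relevant complex of free splittings; the slope $\tfrac{1}{2}$ in $2* \leq n-k-2$ demands an estimate of the form ``roughly $\tfrac{n-3}{2}$-connected'', which sits at the edge of what the existing Hatcher--Vogtmann arguments deliver and may require sharpening. Second, and more seriously for $\mathrm{Aut}(F_n)$ specifically, the simplex stabilisers are less clean than in the surface or linear-group settings: they are extensions of a smaller $\mathrm{Aut}(F_{n-p})$ by a free group, reflecting the freedom to conjugate the chosen splitting, and one must verify that $V$, restricted along these extensions, remains polynomial of the correct degree. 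Checking this compatibility --- in essence, that the free-product monoidal structure interacts with the degree filtration just as the direct-sum structure does for $\mathrm{GL}_n$ --- is where I expect the real difficulty to lie.
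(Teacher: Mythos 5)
The statement you are addressing is Conjecture~\ref{conj:StabilityWithCoefficients}, and the paper does not prove it: it is stated as a conjecture, motivated only by analogy with symmetric groups and mapping class groups and by the vanishing results of Proposition~\ref{prop:VanishTensorPowers} and Corollary~\ref{cor:VanishingExtPowers}. Everywhere it is used downstream (e.g.\ as one possible route to Conjecture~\ref{conj} via the methods of \cite{CM}) it is taken as a hypothesis. So there is no proof in the paper to compare yours against, and you should be aware that what you are attempting is to settle an open problem of the paper, not to reproduce an argument it contains.

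As for your proposal on its own terms: the strategy is the right one --- it is the Dwyer/van der Kallen-style double induction on degree of the coefficient system and homological degree, run against the action on a highly connected complex of partial bases or free splittings, and this is indeed the framework in which the conjecture was eventually established. But as written it is a plan, not a proof, and the two obstacles you flag at the end are not peripheral technicalities; they are the entire mathematical content. Specifically: (i) you need a connectivity bound for the relevant complex of slope consistent with $2* \leq n-k-2$, and you correctly note this ``sits at the edge'' of what the Hatcher--Vogtmann estimates give --- nothing in your outline supplies it; and (ii) the simplex stabilisers for $\mathrm{Aut}(F_n)$ are extensions of smaller automorphism groups by free groups, and you must show that a degree-$k$ coefficient system restricted along such an extension still feeds correctly into the inductive hypothesis --- again you name the issue but do not resolve it. Until both are supplied, the double induction has no base to stand on, so the proposal should be regarded as a credible research programme rather than an argument.
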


This conjecture is also suggested by Proposition \ref{prop:VanishTensorPowers} and Corollary \ref{cor:VanishingExtPowers}. As $\wedge^q H$ is a coefficient system of degree $q$, this conjecture implies an improvement of Corollary \ref{cor:VanishingExtPowers}: that $H_*(\mathrm{Aut}(F_n); \wedge^q H_\bQ)$ is zero in degrees $2* \leq n-q-2$.

\section{Graphs labelled by a space $X$}

Let us denote by $\mathcal{G}_n(X)$ a suitably topologised (cp.\ \cite[\S 2]{galatius-2006}) space of graphs of the homotopy type of $\vee^n S^1$ equipped with a continuous map to a space $X$, and $\mathcal{G}^1_n(X)$ the analogue with pointed graphs equipped with pointed  maps to a pointed space $X$. The following may be proved essentially following \cite{galatius-2006}, but using ideas from \cite{GR-W}.

\begin{thm}
There is a natural map
$$\mathcal{G}^1_\infty(X) \lra Q_0(X_+)$$
to the free infinite loop space on the pointed space $X_+$, which is an integral homology equivalence as long as $X$ is path-connected.
\end{thm}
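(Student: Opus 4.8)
The plan is to prove this by the same strategy Galatius \cite{galatius-2006} uses for the case $X = *$ (where the statement reduces to $B\mathrm{Aut}(F_\infty) \simeq_{H_*} Q_0(S^0)$), organised through a cobordism category of graphs and the parametrised Pontryagin--Thom (scanning) machinery of \cite{GR-W}, which is built precisely to accommodate this sort of extra structure. The natural map itself is constructed by scanning a single object: a pointed graph $\Gamma \subset \bR^\infty$ together with its label $f \co \Gamma \to X$ has a Pontryagin--Thom collapse which, remembering $f$, records near each point of $\bR^\infty$ either the germ of $\Gamma$ decorated by a point of $X$, or the basepoint ``no graph nearby''. This produces a point of $\Omega^\infty \Sigma^\infty X_+ = Q(X_+)$, natural in $(\Gamma, f)$ and recovering Galatius' map when $X = *$.

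To prove this map is a homology equivalence I would first identify the classifying space of the labelled graph cobordism category. Let $\mathcal{C}(X)$ be the category whose objects are finite subsets of $\bR^\infty$ and whose morphisms are graphs properly embedded in a slab $\bR^\infty \times [0,t]$ equipped with a map to $X$, with composition by concatenation of slabs. Following \cite{galatius-2006} one resolves this category by spaces of embedded labelled graphs and applies scanning to the whole category; the argument of \cite{GR-W} then shows that scanning is a weak equivalence, giving $B\mathcal{C}(X) \simeq \Omega^{\infty-1}\mathbf{G}(X)$ for the spectrum $\mathbf{G}(X)$ assembled from the space of germs of labelled graphs. Because a germ either is empty (the basepoint) or is an unlabelled germ together with a point of $X$, this space of germs is the corresponding unlabelled object smashed with $X_+$; since Galatius' computation gives the unlabelled spectrum as the sphere spectrum $\bS$, we get $\mathbf{G}(X) \simeq \bS \wedge X_+ = \Sigma^\infty X_+$ and hence $\Omega B\mathcal{C}(X) \simeq \Omega^\infty \Sigma^\infty X_+ = Q(X_+)$.

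The remaining step passes from the cobordism category to the moduli space of graphs by a group-completion argument, again as in \cite{galatius-2006}. The labelled graphs that are endomorphisms of the basepoint object form a homotopy-commutative monoid $M \simeq \coprod_{n \geq 0} \mathcal{G}^1_n(X)$ under concatenation, and the canonical map $M \to \Omega B\mathcal{C}(X)$ group-completes, so the group-completion theorem computes $H_*(\Omega_0 B\mathcal{C}(X))$ as the localisation of $H_*(M)$ at $\pi_0 M$. Hatcher--Vogtmann homological stability \cite{HV} for $\mathcal{G}^1_n$ then shows that this localisation is computed by the stabilised space $\mathcal{G}^1_\infty(X)$, yielding the asserted integral homology equivalence $\mathcal{G}^1_\infty(X) \to Q_0(X_+)$. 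Here path-connectedness of $X$ forces $\pi_0 Q(X_+) \cong \bZ$, so that after group completion the homotopy is supported on the integer-graded components and the stable homology is identified with the single basepoint component $Q_0(X_+)$ (provided the stabilisation is taken cofinally among label types, which is automatic, and $\mathcal{G}^1_\infty(X)$ already connected, when $X$ is simply connected).

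The main obstacle is the middle step: the labelled analogue of Galatius' graph Madsen--Weiss theorem, i.e.\ that scanning is an equivalence. Its content lies in the connectivity estimates proved by surgery on graphs---contracting and expanding edges, and absorbing, splitting and sliding vertices---and the real work is to check that each such move can be performed compatibly with an arbitrary continuous label $f \co \Gamma \to X$, without altering its homotopy class, so that the estimates hold uniformly in $X$. Once this is secured the passage to labels is formal: the identification $\mathbf{G}(X) \simeq \Sigma^\infty X_+$ and the group-completion bookkeeping follow from the unlabelled theory together with the elementary germ computation above.
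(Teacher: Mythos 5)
The paper offers no proof of this theorem---it is asserted with only the remark that it ``may be proved essentially following \cite{galatius-2006}, but using ideas from \cite{GR-W}''---and your outline (labelled graph cobordism category, scanning to identify the spectrum as $\Sigma^\infty X_+$, then group completion) is precisely that strategy, with the genuinely hard step, namely re-running Galatius's surgery/connectivity arguments in the presence of an arbitrary label $f\co\Gamma\to X$, correctly identified and deferred just as the paper defers it. One small caution: the appeal to Hatcher--Vogtmann homological stability in your group-completion step is both unnecessary (for $n=\infty$ the localisation of $H_*(M)$ at $\pi_0$ is by definition the colimit $H_*(\mathcal{G}^1_\infty(X))$, so no stability range is needed) and unavailable (stability for \emph{labelled} graphs is exactly the paper's Conjecture \ref{conj}, which this theorem is stated unconditionally and so must not assume).
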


In order to obtain information about $\mathcal{G}_n(X)$ for finite $n$, we assume for the remainder of this note the following conjecture regarding a homological stability phenomenon for the spaces $\mathcal{G}_n(X)$.

\begin{conj}\label{conj}
Suppose $X$ is simply connected. Then the map
$$\mathcal{G}^1_n(X) \lra \mathcal{G}^1_{n+1}(X)$$
induces a homology isomorphism in degrees $2* \leq n - 3$, and the map
$$\mathcal{G}^1_n(X) \lra \mathcal{G}_{n}(X)$$
induces a homology isomorphism in degrees $2* \leq n - 5$.
\end{conj}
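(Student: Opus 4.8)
The plan is to adapt the semi-simplicial resolution technique underlying Galatius' proof \cite{galatius-2006} and the general stability machinery of \cite{GR-W} to the labelled spaces $\mathcal{G}^1_n(X)$. One first realises the stabilisation map $\mathcal{G}^1_n(X) \to \mathcal{G}^1_{n+1}(X)$ concretely, as gluing on a single new loop at the marked point labelled by the basepoint of $X$, so that $\bigsqcup_n \mathcal{G}^1_n(X)$ acquires the structure of a module over the monoid generated by this stabilisation. Stability is then extracted in the usual way from the spectral sequence of an augmented semi-simplicial resolution $\mathcal{G}^1_{n,\bullet}(X) \to \mathcal{G}^1_n(X)$ together with a high-connectivity statement for its geometric realisation; since the spectral-sequence bookkeeping and the induction on $n$ are formal, essentially all the content lies in constructing the resolution and proving its connectivity.

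For the resolution I would let a $p$-simplex over a point of $\mathcal{G}^1_n(X)$ be a choice of $p+1$ suitably independent embedded loops based at the marked point, each of which can be collapsed to perform a destabilisation, and each required to carry the label given by the basepoint of $X$. This is the labelled analogue of the complexes of free factorisations (equivalently, sphere systems) used by Hatcher and Vogtmann \cite{HV}. Forgetting all labels defines a map from this resolution to the corresponding unlabelled resolution, whose realisation is highly connected, with connectivity growing linearly in $n$, by the results of \cite{HV, galatius-2006}.

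The main obstacle is to show that the labelling by $X$ does not spoil this connectivity, and this is exactly where the hypothesis that $X$ be simply connected enters. I would study the forgetful map from the labelled resolution to the unlabelled one: its homotopy fibres are assembled from spaces of maps into $X$ extending a prescribed labelling across the collapsed loops, and for simply connected $X$ these fibres are non-empty and highly connected (simple-connectivity guaranteeing in particular that the relevant spaces of label-extensions are connected). A comparison — a Serre spectral sequence, or a gluing/obstruction argument over the simplices — should then transport the unlabelled connectivity bound to the labelled resolution. Controlling how the labels on overlapping configurations of loops interact, and ensuring the resulting range matches the asserted $2* \leq n-3$ rather than something weaker, is the delicate part, and the step most likely to demand genuinely new input beyond a formal transfer of the unlabelled argument.

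The second map $\mathcal{G}^1_n(X) \to \mathcal{G}_n(X)$, which forgets the marked point, I would treat by a parallel resolution adapted to the unpointed spaces, directly mirroring the Hatcher--Vogtmann comparison between $\Gamma_{n,1}$ and $\Gamma_{n,0}$ \cite{HV} and the attendant loss of range that accounts for the weaker bound $2* \leq n-5$; the one non-formal ingredient remains the labelled connectivity statement established for the first map, so once that is in place both halves of the conjecture should follow in tandem. It is worth noting that, via the fibration $\mathrm{Map}_*(\vee^n S^1, X) \to \mathcal{G}^1_n(X) \to \mathcal{G}^1_n$ with fibre $\mathrm{Map}_*(\vee^n S^1, X) \simeq (\Omega X)^n$, this conjecture is closely tied to homological stability with coefficients in the finite-degree systems $H_*(\Omega X)^{\otimes n}$; attacking it directly through resolutions, however, avoids assuming Conjecture \ref{conj:StabilityWithCoefficients} and should in fact recover it.
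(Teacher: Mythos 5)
The statement you are addressing is Conjecture \ref{conj}: the paper does not prove it, but assumes it throughout, remarking only that it ``would surely be unsurprising to the experts'' and indicating two possible routes --- a semi-simplicial resolution argument in the style of \cite{R-WResolution}, or a deduction from Conjecture \ref{conj:StabilityWithCoefficients} via the methods of \cite{CM}. Your proposal follows the first of these routes (and correctly notes the second in passing), so the strategy is the one the author has in mind. However, what you have written is a plan rather than a proof, and the gap it leaves open is precisely the mathematical content of the conjecture.

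Concretely: everything in your outline except one step is, as you yourself say, formal spectral-sequence bookkeeping and induction. The one non-formal step is the high-connectivity of the geometric realisation of the labelled resolution --- equivalently, the claim that labelling by a simply connected $X$ ``does not spoil'' the connectivity of the unlabelled complex of collapsible loops. You propose to prove this by analysing the homotopy fibres of the forgetful map to the unlabelled resolution, but an augmented semi-simplicial space is not a fibration over its augmentation, and the simplices do not assemble into a bundle, so ``a Serre spectral sequence, or a gluing/obstruction argument over the simplices'' is not yet an argument; moreover the specific range $2* \leq n-3$ would have to be extracted from such an argument rather than asserted. This is exactly the point at which the unlabelled case (and the analogous surface case) requires genuine work, and you flag it yourself as ``the step most likely to demand genuinely new input.'' Identifying the hard step is not the same as carrying it out: as written, your proposal establishes nothing beyond what the paper already says in motivating the conjecture, and the treatment of the second map $\mathcal{G}^1_n(X) \to \mathcal{G}_n(X)$ inherits the same gap, since you reduce it to the same unproven connectivity statement.
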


Such a conjecture is of course motivated by the analogous theorem for spaces of surfaces, and would surely be unsurprising to the experts. We believe there should be no essential difficulty in establishing Conjecture \ref{conj}, proceeding analogously to the case of surfaces. Our preferred approach in this case is of course \cite{R-WResolution}, but it would also follow from Conjecture \ref{conj:StabilityWithCoefficients} by the methods of \cite{CM}.

\section{Integral calculations}

We first apply the above discussion to the case $X = \bC\bP^\infty = K(\bZ,2)$. In this case $\mathcal{G}_n^1(\bC\bP^\infty)$ classifies families of pointed graphs with a complex line bundle on the total space, and a trivialisation at the marked point. There is a homotopy fibration
$$BH^* \simeq \mathrm{map}_*(\vee^n S^1, \bC\bP^\infty) \lra \mathcal{G}_n^1(\bC\bP^\infty) \lra  \mathcal{G}_n^1(*) \simeq B\mathrm{Aut}(F_n)$$
with section (given by taking the constant map to the basepoint) and an associated Serre spectral sequence
\begin{equation}\label{eq:sseq}
E_2^{p,q} := H^p(\mathrm{Aut}(F_n); \wedge^q H) \Longrightarrow H^{p+q}(\mathcal{G}_n^1(\bC\bP^\infty);\bZ).
\end{equation}
The first column of this spectral sequence is determined by the following lemma.
\begin{lem}\label{lem:VanishingFirstColumn}
The group of invariants $(\wedge^*H)^{\mathrm{Aut}(F_n)}$ is $\bZ$ in degree zero.
\end{lem}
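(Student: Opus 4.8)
The plan is to exploit the fact that the $\mathrm{Aut}(F_n)$-action on $H = F_n^{\mathrm{ab}} \cong \bZ^n$ factors through $\mathrm{GL}_n(\bZ)$, with $e_1, \dots, e_n$ the images of a free generating set $x_1, \dots, x_n$ of $F_n$ acted on by the standard representation. In this description $\wedge^* H$ is the exterior algebra on $\bZ^n$, the piece $\wedge^q H$ is free on the monomials $e_S = e_{i_1} \wedge \cdots \wedge e_{i_q}$ indexed by subsets $S = \{i_1 < \cdots < i_q\} \subseteq \{1,\dots,n\}$, and the invariants split as a graded group $\bigoplus_q (\wedge^q H)^{\mathrm{Aut}(F_n)}$. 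The degree-zero summand is $\bZ$ with trivial action, which accounts for the asserted copy of $\bZ$; the whole content of the lemma is therefore that each positive-degree summand vanishes.

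To see this, I would introduce, for each $i$, the automorphism $\sigma_i \in \mathrm{Aut}(F_n)$ that inverts the generator $x_i$ and fixes the remaining generators. On abelianisation $\sigma_i$ acts diagonally, sending $e_i \mapsto -e_i$ and fixing $e_j$ for $j \neq i$, so on $\wedge^q H$ it scales each basis monomial $e_S$ by the sign $(-1)^{[i \in S]}$. Writing a putative invariant as $\omega = \sum_S c_S\, e_S$, invariance under $\sigma_i$ forces $c_S = (-1)^{[i \in S]} c_S$, whence $c_S = 0$ whenever $i \in S$. Letting $i$ range over $\{1,\dots,n\}$ and observing that every subset $S$ of size $q \geq 1$ contains at least one index, I conclude that all coefficients vanish, so $(\wedge^q H)^{\mathrm{Aut}(F_n)} = 0$ for $q \geq 1$. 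Reassembling the graded pieces then yields the lemma.

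This argument is elementary and, crucially, works directly over $\bZ$, so there is no genuine obstacle; the only point deserving a moment's care is to confirm that the generator-inverting maps are bona fide automorphisms of $F_n$ (they are involutions) inducing the stated diagonal action, after which the computation is immediate. As a structural sanity check one could instead tensor with $\bQ$ and invoke the Zariski density of $\mathrm{SL}_n(\bZ)$ in $\mathrm{SL}_n(\bQ)$ together with the irreducibility and nontriviality of $\wedge^q \bQ^n$ for $0 < q < n$ (and the determinant character for $q = n$), transferring the vanishing back across the injection $\wedge^q H \hookrightarrow \wedge^q H \otimes \bQ$; but this is considerably heavier than the sign-change observation requires, and I would present the elementary computation as the actual proof.
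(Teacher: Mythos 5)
Your argument is correct and follows the same route as the paper, which simply notes that the action factors through $GL_n(\bZ)$ and that the vanishing of $(\wedge^* H)^{GL_n(\bZ)}$ in positive degrees is classical; your sign-change automorphisms $x_i \mapsto x_i^{-1}$ supply an explicit (and valid, integrally) proof of exactly that classical fact. No gap.
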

\begin{proof}
$\mathrm{Aut}(F_n)$ acts on $H$ via $GL_n(\bZ)$, so we must show $(\wedge^*H)^{GL_n(\bZ)}$ is trivial in positive degrees, but this is classical.
\end{proof}

Assuming that Conjecture \ref{conj} holds we have a description of the $E_\infty$-page of the spectral sequence (\ref{eq:sseq}), by Galatius' theorem we have a description of the $q=0$ line, and by Lemma \ref{lem:VanishingFirstColumn} we have a description of the $p=0$ line. Hence we are able to make the following calculation.

\begin{prop}[assuming Conjecture \ref{conj}]
We have
\begin{eqnarray*}
& H^1(\mathrm{Aut}(F_n);H) = \bZ & \text{for $n \geq 7$}\\
& H^2(\mathrm{Aut}(F_n);H) = 0 & \text{for $n \geq 9$.}
\end{eqnarray*}
\end{prop}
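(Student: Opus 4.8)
The plan is to run the multiplicative Serre spectral sequence (\ref{eq:sseq}) and determine its relevant $E_\infty$-entries from three inputs: the abutment, computed stably; the bottom row $E_2^{p,0}=H^p(\mathrm{Aut}(F_n);\bZ)$, computed by Galatius; and the first column $E_2^{0,q}$, computed by Lemma \ref{lem:VanishingFirstColumn}. The decisive structural feature is that the fibration $BH^\ast\to\mathcal{G}_n^1(\bC\bP^\infty)\to B\mathrm{Aut}(F_n)$ has a section, so the edge map $H^\ast(\mathrm{Aut}(F_n);\bZ)\to H^\ast(\mathcal{G}_n^1(\bC\bP^\infty);\bZ)$ is split injective; equivalently, no nonzero differential can have image in the bottom row, and the section induces a splitting $H^\ast(\mathcal{G}_n^1(\bC\bP^\infty))\cong H^\ast(\mathrm{Aut}(F_n);\bZ)\oplus\widetilde H^\ast$ into the bottom row and a ``reduced'' part assembled from $E_\infty^{p,q}$ with $q\ge 1$.

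First I would identify the abutment. Assuming Conjecture \ref{conj}, $H^\ast(\mathcal{G}_n^1(\bC\bP^\infty);\bZ)$ agrees with $H^\ast(\mathcal{G}_\infty^1(\bC\bP^\infty);\bZ)$ in degrees $2\ast\le n-3$, and by the homology equivalence $\mathcal{G}^1_\infty(\bC\bP^\infty)\to Q_0((\bC\bP^\infty)_+)$ the latter is $H^\ast(Q_0((\bC\bP^\infty)_+);\bZ)$; these two stability ranges are exactly what force the hypotheses $n\ge 7$ (degree $\le 2$) and $n\ge 9$ (degree $\le 3$). To compute the target in low degrees I would use the stable splitting $\Sigma^\infty(\bC\bP^\infty)_+\simeq \bS\vee\Sigma^\infty\bC\bP^\infty$, giving $Q_0((\bC\bP^\infty)_+)\simeq Q_0(S^0)\times\Omega^\infty\Sigma^\infty\bC\bP^\infty$, and compute through degree $3$ by Künneth. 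The factor $\Omega^\infty\Sigma^\infty\bC\bP^\infty$ is simply connected with $H_2=\bZ$ and $H_3=0$ (its bottom stable cell contributes $\bZ$, and there is no new homology in degree $3$, since $\pi_3^s(\bC\bP^\infty)=0$ because the $4$-cell is attached by $\eta$), while $Q_0(S^0)$ carries the stable homology of the symmetric groups, which is finite in positive degrees with $H_1=H_2=\bZ/2$. This yields $H^\ast(Q_0((\bC\bP^\infty)_+);\bZ)=\bZ,\,0,\,\bZ\oplus\bZ/2,\,\bZ/2$ in degrees $0,1,2,3$, the degree-$2$ free part coming from $\bC\bP^\infty$ and the torsion from the symmetric groups.

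Next I would feed in the two known lines: by Galatius the bottom row is $H^p(\Sigma_\infty;\bZ)$ in the stable range, namely $\bZ,0,\bZ/2,\bZ/2$ in degrees $0$--$3$, and by Lemma \ref{lem:VanishingFirstColumn} the first column vanishes above the origin. The section now does the essential work. Since it forbids differentials into the bottom row, both $d_2\colon E_2^{1,1}\to E_2^{3,0}$ and $d_2\colon E_2^{2,1}\to E_2^{4,0}$ vanish; and since the only possible incoming differentials to $(1,1)$ and $(2,1)$ originate off the page or at $E_2^{0,2}=(\wedge^2H)^{\mathrm{Aut}(F_n)}=0$, we obtain $E_\infty^{1,1}=E_2^{1,1}=H^1(\mathrm{Aut}(F_n);H)$ and $E_\infty^{2,1}=E_2^{2,1}=H^2(\mathrm{Aut}(F_n);H)$. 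Reading off the splitting in total degree $2$ gives $\widetilde H^2=E_\infty^{1,1}$ (as $E_\infty^{0,2}=0$), so from $(\bZ\oplus\bZ/2)=\bZ/2\oplus\widetilde H^2$ we get $E_\infty^{1,1}=\bZ$, i.e.\ $H^1(\mathrm{Aut}(F_n);H)=\bZ$. In total degree $3$ the surviving $E_\infty^{3,0}=H^3(\mathrm{Aut}(F_n);\bZ)=\bZ/2$ already exhausts the abutment $\bZ/2$, so by comparing orders $E_\infty^{2,1}=E_\infty^{1,2}=0$; hence $H^2(\mathrm{Aut}(F_n);H)=0$.

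The main obstacle is the integral computation of the abutment in degrees $2$ and $3$, and in particular pinning down the $2$-torsion exactly: this rests on the low-degree stable homology of the symmetric groups and on $\pi_3^s(\bC\bP^\infty)=0$, and it is what allows the order count in degree $3$ to leave no room for $E_\infty^{2,1}$. By contrast, if one only tracks ranks the argument is immediate—the rational spectral sequence gives $\mathrm{rank}\,H^1(\mathrm{Aut}(F_n);H)=1$ and $\mathrm{rank}\,H^2(\mathrm{Aut}(F_n);H)=0$ at once—so the real content is excluding torsion in $H^2$, and it is precisely the section (forcing the differentials to vanish) together with the exact integral abutment that accomplishes this without any separate universal-coefficients or torsion-freeness input.
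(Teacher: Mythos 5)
Your proposal is correct and follows essentially the same route as the paper: identify the abutment $H^{\leq 3}(Q_0(\bC\bP^\infty_+);\bZ)=\bZ,0,\bZ\oplus\bZ/2,\bZ/2$ via Conjecture \ref{conj}, use the section to kill differentials into the bottom row and Lemma \ref{lem:VanishingFirstColumn} to kill the first column, read off $H^1(\mathrm{Aut}(F_n);H)\cong\bZ$ from total degree $2$, and conclude $E_\infty^{2,1}=E_2^{2,1}=0$ from total degree $3$. Your extra justification of the abutment (stable splitting plus $\pi_3^s(\bC\bP^\infty)=0$) is sound and merely expands on the table the paper quotes.
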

Of course, the first isomorphism follows from the work of Satoh \cite[Theorem 1]{Satoh} and the universal coefficient theorem, and in fact he establishes it for $n \geq 3$. On the other hand, in \cite{Satoh2} he shows\footnote{Rather, he shows that $H_2(\mathrm{Aut}(F_n);H^*)\otimes \bZ[\tfrac{1}{2}]$ is zero for $n \geq 6$, but the above then follows by universal coefficients.} that $H^2(\mathrm{Aut}(F_n);H) \otimes \bZ[\tfrac{1}{2}]$ is zero as long as $n \geq 6$, but is unable to deal with the $2$-torsion. In this sense our calculation is stronger, at the expense of increasing $n$ from $6$ to $9$.
\begin{proof}
We have the following known cohomology groups in low degrees
\begin{center}
\begin{tabular}{lcccc}
\toprule
$i$ & $0$ & $1$ & $2$ & $3$  \\ \toprule
$H^*(Q_0(S^0);\bZ)$ & $\bZ$ & $0$ &  $\bZ/2$ & $\bZ/2$ \\ 
$H^*(Q_0(\bC\bP^\infty_+);\bZ)$ & $\bZ$ & $0$ &  $\bZ \oplus \bZ/2$ & $\bZ/2$ \\ 
\bottomrule
\end{tabular}
\end{center}
\noindent and hence in total degree 2 the spectral sequence gives a short exact sequence
$$0 \lra \bZ/2 \lra \bZ \oplus \bZ/2 \lra H^1(\mathrm{Aut}(F_n);H) \lra 0$$
as long as $n \geq 7$, and so $H^1(\mathrm{Aut}(F_n);H) \cong \bZ$ in this range. On the other hand, if we consider total degree 3, we see that $E_2^{2,1}$ can support no differentials (as the fibration has a  section), but $E_\infty^{2,1}$ must be zero as long as $n \geq 9$, hence $H^2(\mathrm{Aut}(F_n);H) = 0$.
\end{proof}

This should be contrasted with with a theorem of Bridson and Vogtmann \cite[Theorem B]{BridVogt}, who show that the extension
$$H = F_n / F_n' \lra \mathrm{Aut}(F_n)/F_n' \lra \mathrm{Out}(F_n)$$
is non-trivial for all $n \geq 2$, and hence gives a non-trivial class $\zeta \in H^2(\mathrm{Out}(F_n);H)$. In particular, we see that $H^2(\mathrm{Out}(F_n);H) \to H^2(\mathrm{Aut}(F_n);H)$ is never an isomorphism for $n \geq 9$, and hence that although $\mathrm{Aut}(F_n) \to \mathrm{Out}(F_n)$ has homology stability for constant coefficients, it does not (in general) for coefficients system of finite degree. To study this situation from our point of view, we consider the diagram
\begin{diagram}
BH^*\\
\dTo\\
\mathrm{map}(\vee^n S^1, \bC\bP^\infty) & \rTo & \mathcal{G}_n(\bC\bP^\infty) & \rTo^{\pi} \mathcal{G}_n(*) \simeq B\mathrm{Out}(F_n)\\
\dTo^p\\
\bC\bP^\infty
\end{diagram}
where the row and column are fibrations. Note that $p$ is a trivial fibration and is split via the inclusion $s : \bC\bP^\infty \to \mathrm{map}(\vee^n S^1, \bC\bP^\infty)$ of the constant maps, and there is an inclusion $\iota : \mathcal{G}_n(*) \times \bC\bP^\infty \to \mathcal{G}_n(\bC\bP^\infty)$ of the graphs with constant maps to $\bC\bP^\infty$. The Leray--Serre spectral sequence for the horizontal fibration is
\begin{equation}\label{eq:sseqOut}
\bar{E}_2^{p,*} := H^p(\mathrm{Out}(F_n); \wedge^* H) \otimes \bZ[a] \Longrightarrow H^*(\mathcal{G}_n(\bC\bP^\infty);\bZ)
\end{equation}
where $a$ is the canonical class in $H^2(\bC\bP^\infty;\bZ)$, so has bidegree $(p,q)=(0,2)$.

\begin{prop}[assuming Conjecture \ref{conj}]\label{prop:IntCalcOut}
We have
\begin{align*}
H^1(\mathrm{Out}(F_n);H) &= 0 \quad\quad\quad\quad \text{\,\,\,\,for $n \geq 7$}\\
H^2(\mathrm{Out}(F_n);H) &= \bZ/(n-1) \quad \text{for $n \geq 9$.}
\end{align*}
\end{prop}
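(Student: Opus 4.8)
The plan is to read off both groups from the Serre spectral sequence (\ref{eq:sseqOut}), all of whose relevant inputs we already know. By Galatius' theorem the $q=0$ row is $H^p(\mathrm{Out}(F_n);\bZ)\cong H^p(Q_0(S^0);\bZ)$, by (the evident $\mathrm{Out}$-analogue of) Lemma \ref{lem:VanishingFirstColumn} the $p=0$ column is $\bZ[a]$, and by the Theorem together with Conjecture \ref{conj} the abutment is $H^*(Q_0(\CPinf_+);\bZ)$, which is given by the table in the stated range. Here one uses $Q_0(\CPinf_+)\simeq Q_0(S^0)\times Q(\CPinf)$, so that in total degree $2$ the abutment is $\bZ\oplus\bZ/2$, the free summand being the image of the generator of $H^2(Q(\CPinf))\cong\bZ$. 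Finally, since $\pi$ admits the section $s$, every differential whose target lies in the $q=0$ row vanishes, so that row survives to $\bar E_\infty$.

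To handle the first row I would work in total degree $2$. The entries are $\bar E_\infty^{2,0}=\bZ/2$, $\bar E_\infty^{1,1}=H^1(\mathrm{Out}(F_n);H)$ (there are no differentials into or out of this spot), and $\bar E_\infty^{0,2}=\ker\!\big(d_2\colon\bZ\langle a\rangle\to H^2(\mathrm{Out}(F_n);H)\big)$, the only possible differential out of $\bZ\langle a\rangle$ being this $d_2$ (the potential $d_3$ lands in the $q=0$ row, hence vanishes). The subgroup $\bar E_\infty^{0,2}\subseteq\bZ\langle a\rangle$ is torsion-free, and it has rank $1$ because the generator $A\in H^2(\mathcal{G}_n(\CPinf);\bZ)$ restricts to a nonzero multiple of $a$ on the fibre (it is pulled back from the generator of $H^2(Q(\CPinf))$, which restricts nontrivially along $\CPinf\hookrightarrow\mathrm{map}(\vee^nS^1,\CPinf)$). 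Comparing the associated graded with the abutment $\bZ\oplus\bZ/2$: the free rank forces $H^1(\mathrm{Out}(F_n);H)$ to be finite, and the torsion, being exactly $\bZ/2=\bar E_\infty^{2,0}$, leaves no room for torsion in $\bar E_\infty^{1,1}$. Hence $H^1(\mathrm{Out}(F_n);H)=0$. (The sharper range $n\geq7$ is obtained by instead noting that $H^1(\mathrm{Out}(F_n);H)$ injects into $H^1(\mathrm{Aut}(F_n);H)=\bZ$ via the edge map of the fibration $\vee^nS^1\to B\mathrm{Aut}(F_n)\to B\mathrm{Out}(F_n)$, so is torsion-free, and that it vanishes rationally.)

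For the second group I first show $H^2(\mathrm{Out}(F_n);H)$ is cyclic, using the Serre spectral sequence of $\vee^nS^1\to B\mathrm{Aut}(F_n)\to B\mathrm{Out}(F_n)$ with coefficients in $H$, which has two rows $H^p(\mathrm{Out}(F_n);H)$ and $H^p(\mathrm{Out}(F_n);H^*\otimes H)$, with $(H^*\otimes H)^{\mathrm{Out}(F_n)}=\mathrm{End}_{GL_n(\bZ)}(H)=\bZ\langle\mathrm{id}\rangle$. As $H^2(\mathrm{Aut}(F_n);H)=0$, the transgression $d_2\colon\bZ\to H^2(\mathrm{Out}(F_n);H)$ is onto, so $H^2(\mathrm{Out}(F_n);H)$ is cyclic, generated by $\zeta=d_2(\mathrm{id})$, the nontrivial class of Bridson--Vogtmann \cite{BridVogt}. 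To pin down its order I return to (\ref{eq:sseqOut}): the degree-$2$ computation above identifies $\bar E_\infty^{0,2}=\ker(d_2|_{\bZ a})$ with the image of the edge homomorphism into $\bZ\langle a\rangle$, which (torsion restricting to zero) is generated by $A|_{\mathrm{fibre}}$. The key geometric input is that $A|_{\mathrm{fibre}}=(n-1)\,a$, so $\ker(d_2|_{\bZ a})=(n-1)\bZ\langle a\rangle$ and $d_2(a)$ has order exactly $n-1$; a look at total degree $3$, where $\bar E_\infty^{3,0}=\bZ/2$ already accounts for the abutment $\bZ/2$, shows $\operatorname{coker}(d_2|_{\bZ a})=\bar E_\infty^{2,1}=0$, i.e.\ $d_2(a)$ generates. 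Therefore $H^2(\mathrm{Out}(F_n);H)\cong\bZ/(n-1)$.

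The only substantial step, and where I expect the real work to be, is the claim $A|_{\mathrm{fibre}}=(n-1)\,a$; everything else is spectral-sequence bookkeeping against the known inputs. To prove it I would analyse Galatius' scanning map $\mathcal{G}_n(\CPinf)\to Q_0(\CPinf_+)$ closely enough to see that the tautological degree-$2$ class is fibrewise integration of $c_1$ of the universal line bundle weighted by the Euler characteristic of the graph; concretely, via the Becker--Gottlieb transfer of the universal $\vee^nS^1$-bundle, for which $\pi_!\pi^*$ is multiplication by $\chi(\vee^nS^1)=1-n$. The factor $n-1$ is thus $-\chi(\vee^nS^1)$, which is precisely why it is invisible to the $H^*$-calculations but governs the $H$-calculation. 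All the isomorphisms invoked hold only within the stability ranges of Conjecture \ref{conj} and Galatius' theorem, which in the two degrees at issue give the thresholds $n\geq7$ and $n\geq9$.
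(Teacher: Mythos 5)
Your argument is correct and follows essentially the same route as the paper: everything reduces to showing that the constant-maps inclusion $\CPinf \to \mathcal{G}_n(\CPinf)$ has image $(n-1)\bZ$ on $H^2$, which you, like the paper, obtain from the Becker--Gottlieb transfer of the trivial $\vee^n S^1$-bundle (where $\pi_!\pi^*$ is multiplication by $\chi(\vee^n S^1)=1-n$), followed by the same bookkeeping in total degrees $2$ and $3$ of the spectral sequence (\ref{eq:sseqOut}). Your separate cyclicity argument via the two-row spectral sequence of $F_n \to \mathrm{Aut}(F_n) \to \mathrm{Out}(F_n)$ is redundant, since surjectivity of $d_2$ out of the cyclic group $\bar{E}_2^{0,2}=\bZ\langle a\rangle$ already yields cyclicity.
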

\begin{proof}
We first claim that the map
$$\bC\bP^\infty \overset{s}\lra \mathrm{map}(\vee^n S^1, \bC\bP^\infty) \lra \mathcal{G}_n(\bC\bP^\infty)$$
has image $(n-1)\bZ \subset \bZ = H^2(\bC\bP^\infty;\bZ)$ on second cohomology. By Conjecture \ref{conj}, it is enough to prove this after composing with the map to $Q_0(\bC\bP^\infty_+)$ as long as $n \geq 7$. Up to translation of components, this map is given by the Becker--Gottlieb transfer $\bC\bP^\infty  \to Q_{1-n}(\bC\bP^\infty \times (\vee^n S^1)_+)$ for the trivial graph bundle over $\bC\bP^\infty$ composed with projection to $Q_{1-n}(\bC\bP^\infty_+)$. By standard properties of the transfer, this is $(1-n)$ times the standard inclusion, which on second cohomology induces multiplication by $(1-n)$, as required.

This describes the edge homomorphism of the spectral sequence (\ref{eq:sseqOut}). As it converges to zero for positive Leray filtration in total degree $3$, the differential $d_2 : \bZ = \bar{E}_2^{0, 2} \to \bar{E}_2^{2,1}$ must be onto (so $\bar{E}_2^{2,1}$ is cyclic) and the kernel is $(n-1)\bZ$, so $\bar{E}_2^{2,1} = H^2(\mathrm{Out}(F_n);H) \cong \bZ/(n-1)$. On the other hand, in total degree $2$ we see $(n-1)\bZ = \bar{E}_\infty^{0,2}$ and $\bZ/2 = \bar{E}_\infty^{2,0}$, and it converges to $\bZ/2 \oplus \bZ$, so observing the direction of the Leray filtration we see that $H^1(\mathrm{Out}(F_n);H) = \bar{E}_2^{1,1} =0$.
\end{proof}

The calculation $H^2(\mathrm{Out}(F_n);H) = \bZ/(n-1)$ along with the result of Bridson and Vogtmann \cite{BridVogt} that their class $\zeta \in H^2(\mathrm{Out}(F_n);H)$ remains non-trivial in the group $H^2(\mathrm{Out}(F_n);H/rH)$ for any $r$ not\footnote{The paper \cite{BridVogt} contains an unfortunate misprint, where they make this statement for those $r$ which \emph{are} coprime to $(n-1)$.} coprime to $(n-1)$ implies that the class $\zeta$ generates $H^2(\mathrm{Out}(F_n);H)$.

\section{Symmetric powers}

We now turn to the modules $\Sym^q H$, for which we let the background space be $K(\bZ,3)$. Then there is an equivalence $\mathrm{map}_*(\vee^n S^1, K(\bZ,3)) \simeq K(H^*, 2)$, and so the cohomology of this space as an $\mathrm{Out}(F_n)$-module is $\Sym^* H$ with grading doubled. The relevant spectral sequences are then, in rational cohomology,
\begin{eqnarray}
E_2^{p,2q} := H^p(\mathrm{Aut}(F_n);\Sym^q H_\bQ) \Longrightarrow H^*(\mathcal{G}_n^1(K(\bZ,3));\bQ) \label{eq:sseqSym}\\
\bar{E}_2^{p,*} := H^p(\mathrm{Out}(F_n);\Sym^* H_\bQ) \otimes \Lambda[a] \Longrightarrow H^*(\mathcal{G}_n(K(\bZ,3));\bQ) \label{eq:sseqSymOut}
\end{eqnarray}
where $a \in H^3(K(\bZ,3);\bQ)$ is the tautological class.

\section{Rational calculations}

In order to obtain information in higher degrees, we pass to rational coefficients. We may then calculate
$$H^*(\mathcal{G}_\infty^1(\bC\bP^\infty);\bQ) \cong H^*(Q_0(\bC\bP^\infty_+);\bQ) \cong \bQ[\chi_2, \chi_4, \chi_6, ...]$$
where $\chi_{2i}$ is the cohomology suspension of the $i$-th power of the Chern class. These classes $\chi_{2i}$ may be defined intrinsically on $\mathcal{G}_n^1(\bC\bP^\infty)$ by applying the Becker--Gottlieb transfer \cite{BG} for the universal family of graphs to powers of the first Chern class of the complex line bundle on the universal family. The spectral sequence (\ref{eq:sseq}) in rational cohomology converges to this algebra in the stable range, and the above calculation shows that the element $\chi_2$ is detected in the group $H^1(\mathrm{Aut}(F_n);H_\bQ)$. We may phrase this as saying that the class $\chi_2 \in H^2(\mathcal{G}_n^1(\bC\bP^\infty);\bQ)$ has Leray filtration precisely 1 with respect to the map $\mathcal{G}_n^1(\bC\bP^\infty) \to \mathcal{G}_n^1(*)$. Assuming Conjecture \ref{conj}, the class $\chi_{2i}$ must be detected in the spectral sequence as long as $n \geq 2i+3$, and we may ask in which Leray filtration it is detected. In order to use this calculation with the spectral sequences (\ref{eq:sseq}) and (\ref{eq:sseqOut}) in reverse, we make the following conjecture.

\begin{conj}\label{conj:Collapse}
The spectral sequences (\ref{eq:sseq}) and (\ref{eq:sseqSym}) rationally collapse in total degrees $2* \leq n-3$ and the spectral sequences (\ref{eq:sseqOut}) and (\ref{eq:sseqSymOut}) rationally collapse in total degrees $2* \leq n-5$.
\end{conj}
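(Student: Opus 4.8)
The plan is to exploit the multiplicative structure of all four spectral sequences together with the fact that, in the stable range, their abutments are \emph{free} graded-commutative algebras whose generators are realised by explicit transfer classes living on the total spaces. For (\ref{eq:sseq}), Galatius' theorem and Conjecture \ref{conj} identify the abutment with $H^*(Q_0(\CPinf_+);\bQ) \cong \bQ[\chi_2,\chi_4,\dots]$, and the generators $\chi_{2i} = \trf(c^i)$ are defined on $\mathcal{G}^1_n(\CPinf)$ itself, so they are permanent cycles. Since the $d_r$ are derivations, collapse follows as soon as the $E_2$-page is generated as a bigraded algebra by permanent cycles; the whole problem therefore reduces to controlling which classes can occur in $E_2 = H^*(\mathrm{Aut}(F_n);\wedge^* H_\bQ)$ beyond those forced by the $\chi_{2i}$.

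The cleanest leverage comes from a parity bookkeeping. The abutment $\bQ[\chi_2,\chi_4,\dots]$ is concentrated in even total degree, while every differential raises total degree by one. Hence if one knows a priori that $H^p(\mathrm{Aut}(F_n);\wedge^q H_\bQ)=0$ whenever $p+q$ is odd, then every $E_r$ is even-concentrated, each $d_r$ lands in the (zero) odd part, and the sequence collapses; conversely collapse forces exactly this vanishing, so for (\ref{eq:sseq}) the collapse statement is \emph{equivalent} to this single parity-vanishing assertion. The same applies to (\ref{eq:sseqOut}): its abutment $H^*(\mathcal{G}_n(\CPinf);\bQ)$ agrees with $\bQ[\chi_2,\chi_4,\dots]$ in the range by the second comparison map of Conjecture \ref{conj}, hence is again even-concentrated, and since the extra generator $a$ has even degree, collapse is equivalent to $H^p(\mathrm{Out}(F_n);\wedge^q H_\bQ)=0$ for $p+q$ odd. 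I would try to prove this parity vanishing directly, anchoring the $p=0$ column by Lemma \ref{lem:VanishingFirstColumn} and reducing to the stable groups via the stability range.

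For the symmetric sequences (\ref{eq:sseqSym}) and (\ref{eq:sseqSymOut}) the reduction is sharper but far less forgiving. Here $H^*(Q_0(K(\bZ,3)_+);\bQ)$ is an \emph{exterior} algebra on a single class in degree $3$, concentrated in total degrees $0$ and $3$ only; so collapse is equivalent to the strong vanishing $H^p(\mathrm{Aut}(F_n);\Sym^q H_\bQ)=0$ for every $(p,q)$ other than $(0,0)$ and $(1,1)$ in the stable range (and analogously for $\mathrm{Out}(F_n)$). Since the abutment now meets both parities, the crude parity argument is unavailable, and one genuinely needs an \emph{upper bound} on the size of the twisted cohomology rather than a concentration statement.

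This upper bound is the crux and the step I expect to be the main obstacle. The formal reductions above do not by themselves close the argument: without an independent handle on $H^p(\mathrm{Aut}(F_n);\wedge^q H_\bQ)$ and $H^p(\mathrm{Aut}(F_n);\Sym^q H_\bQ)$, one cannot exclude pairs of ``extra'' classes that cancel against one another under a differential while respecting all parity constraints. My preferred route would be to produce the bound functorially, either by comparing the four sequences through their common dependence on the background space and the stable splitting $\Sigma^\infty X_+ \simeq \bS \vee \Sigma^\infty X$, or by importing a polynomial-functor description of the stable twisted cohomology analogous to the one available for mapping class groups of surfaces. I expect this input to lie genuinely beyond the scanning-and-stability package assembled so far, which is presumably why the collapse is only conjectured here.
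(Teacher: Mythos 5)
The statement you are trying to prove is stated in the paper as a \emph{conjecture}: the paper offers no proof of it, only motivation by analogy with Kawazumi's collapse theorem for surfaces with boundary \cite[Theorem 1.C]{KawazumiGoD} and the closed-surface case of \cite{ERW10}, and explicitly flags the $\mathrm{Out}(F_n)$ cases as more speculative. So there is no argument in the paper to compare yours against, and the real question is whether your proposal closes the gap. It does not. Your formal reductions are correct: since the differentials are derivations and shift total degree by one, and since the abutments of (\ref{eq:sseq}) and (\ref{eq:sseqOut}) are concentrated in even total degree in the stable range, collapse of those two sequences is indeed equivalent (up to boundary-of-range bookkeeping) to the vanishing of $H^p(-;\wedge^q H_\bQ)$ for $p+q$ odd; likewise collapse of (\ref{eq:sseqSym}) and (\ref{eq:sseqSymOut}) is equivalent to the near-total vanishing of $H^p(-;\Sym^q H_\bQ)$ that the paper records in its final Corollary. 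But these equivalences are restatements of the conjecture, not steps toward proving it: the conjectural Corollary at the end of the paper is \emph{deduced from} Conjecture \ref{conj:Collapse}, so you cannot feed the concentration statement back in as an input without circularity.

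The genuine gap is exactly the one you name yourself in your last paragraph: an independent upper bound on $H^p(\mathrm{Aut}(F_n);\wedge^q H_\bQ)$ and $H^p(\mathrm{Aut}(F_n);\Sym^q H_\bQ)$ in the stable range. Nothing in the scanning/stability package (Galatius' theorem, Conjecture \ref{conj}, Lemma \ref{lem:VanishingFirstColumn}, the permanent cycles $\chi_{2i}$) rules out a pair of extra classes in adjacent total degrees cancelling under a differential, and your proposed routes (the stable splitting of $\Sigma^\infty X_+$, or a polynomial-functor description of the stable twisted cohomology) are sketched only at the level of intention. In the surface case the analogous bound is supplied by Kawazumi's explicit computation (for $\Gamma_{g,1}$) or by Hodge theory (for closed surfaces), neither of which is available here. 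Until such a bound is produced, your argument establishes only that the conjecture is equivalent to a vanishing statement, which is where the paper already leaves matters.
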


This conjecture is not unreasonable for $\mathrm{Aut}(F_n)$ by comparison with the analogous theorem for surfaces with boundary proved by Kawazumi \cite[Theorem 1.C]{KawazumiGoD}. The analogous theorem for closed surfaces is proved in \cite{ERW10} but uses complex geometry and Hodge theory in an important way, which is not available here. Thus for $\mathrm{Out}(F_n)$ it is perhaps more speculative.

\begin{thm}[assuming Conjecture \ref{conj}]\label{thm:LerayFilt}
With respect to the map $\mathcal{G}_n^1(\bC\bP^\infty) \to \mathcal{G}_n^1(*)$, the element $\chi_{2i}$ has Leray filtration precisely $i$, and more generally, the monomial $\chi_{2}^{a_1} \cdot \chi_{4}^{a_n} \cdots \chi_{2n}^{a_n}$ has Leray filtration precisely $a_1 + 2a_2 + \cdots + na_n$. Thus the Leray filtration coincides with the filtration by half the cohomological degree, and the associated graded algebra is $\bQ[x_2, x_4, \ldots]$ where $x_{2i}$ has bidegree $(i,i)$.
\end{thm}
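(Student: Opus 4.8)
The plan is to reduce to the polynomial generators $\chi_{2i}$ using multiplicativity of the Serre spectral sequence, and then to locate the symbol of $\chi_{2i}$ by realising it as the fibrewise integral of a power of a class of \emph{fibre degree one}. Write $\rho\colon\mathcal{G}_n^1(\bC\bP^\infty)\to\mathcal{G}_n^1(*)$ for the projection, let $\pi'\colon E'\to\mathcal{G}_n^1(*)$ denote the universal graph bundle, and let $\pi\colon E\to\mathcal{G}_n^1(\bC\bP^\infty)$ be its pullback, so that $\pi,\pi',\rho$ together with the remaining projection $\tilde\rho\colon E\to E'$ form a Cartesian square. Then $\tilde\rho$ is again a fibration with fibre $\mathrm{map}_*(\vee^nS^1,\bC\bP^\infty)\simeq BH^*$, its Serre spectral sequence has the shape $H^p(E';\wedge^q H)\Rightarrow H^{p+q}(E)$, and the evaluation $ev\colon E\to\bC\bP^\infty$ produces $c=ev^*(a)\in H^2(E)$ with $\chi_{2i}=\trf_\pi(c^i)$.

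The key structural input I would establish is that the Becker--Gottlieb transfer $\trf_\pi$ induces a morphism from the spectral sequence of $\tilde\rho$ to that of $\rho$ which is the identity on the fibre coordinate $q$ and is the graph-transfer $\trf_{\pi'}$ on the base coordinate $p$. This should hold because $\pi$ is pulled back from $\pi'$ and carries the fibres of $\tilde\rho$ isomorphically onto those of $\rho$, so the graph direction along which $\trf_\pi$ integrates is transverse to the $BH^*$-fibres; fibrewise over $\mathcal{G}_n^1(*)$ the transfer is simply that of the trivial graph bundle $BH^*\times(\vee^nS^1)\to BH^*$. In particular $\trf_\pi$ preserves both the Leray filtration degree and the fibre degree. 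Next, $c$ restricts on a fibre of $\tilde\rho$ to $ev_y^*(a)$, where $ev_y\colon\mathrm{map}_*(\vee^nS^1,\bC\bP^\infty)\to\bC\bP^\infty$ is evaluation at the point $y$ of the graph; as this factors through the single circle containing $y$, it is zero on $H^2$, so $c$ has fibre degree one and Leray filtration $1$ for $\tilde\rho$, with symbol $\tau\in H^1(E';H)$ the tautological class restricting to $\mathrm{id}_H\in H^1(\vee^nS^1)\otimes H$ on each graph fibre.

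Consequently $c^i$ has fibre degree $i$ and filtration $\ge i$ for $\tilde\rho$, and by the transfer morphism $\chi_{2i}=\trf_\pi(c^i)$ has a symbol in fibre degree $\le i$, i.e.\ Leray filtration $\ge i$ for $\rho$; equivalently $H^p(\mathrm{Aut}(F_n);\wedge^qH_\bQ)=0$ above the diagonal $q>p$ in the stable range. Multiplicativity then gives filtration $\ge a_1+2a_2+\cdots$ for an arbitrary monomial. Combining this with $H^{>0}(\mathrm{Aut}(F_n);\bQ)=0$ (Galatius) and $(\wedge^{>0}H)^{\mathrm{Aut}(F_n)}=0$ (Lemma \ref{lem:VanishingFirstColumn}), so that $E_2$ vanishes on the $p=0$ and $q=0$ axes, a count of indecomposables together with the Poincaré series of $\bQ[\chi_2,\chi_4,\dots]$ and the collapse of Conjecture \ref{conj:Collapse} forces $E_\infty=E_2$ to be a free polynomial algebra with exactly one generator $z_{2i}=\mathrm{gr}(\chi_{2i})$ in each even total degree $2i$, each lying on or below the diagonal.

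It remains to show that each generator lies \emph{exactly} on the diagonal, i.e.\ that $\chi_{2i}$ has Leray filtration precisely $i$ rather than greater; by fibre-degree preservation this is equivalent to the non-vanishing of the fibrewise integral $\trf_{\pi'}(\tau^i)$ in $H^i(\mathrm{Aut}(F_n);\wedge^iH_\bQ)$. This is the step I expect to be the main obstacle: \emph{a priori} the transfer could annihilate the permanent cycle $\tau^i$ and push $\chi_{2i}$ into higher filtration, so the non-vanishing is genuinely equivalent to the theorem and cannot be extracted from the formal structure alone. I would resolve it by an explicit computation of this integral, following the template of Kawazumi's analysis of the twisted Morita--Mumford classes for surface bundles, the case $i=1$ being exactly the non-triviality of the Satoh class recorded in the preceding proposition. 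Granting this, $\chi_{2i}$ has filtration precisely $i$ with symbol $x_{2i}=\trf_{\pi'}(\tau^i)$ of bidegree $(i,i)$; multiplicativity upgrades the monomial lower bound to equality, and the associated graded is $\bQ[x_2,x_4,\dots]$ with $x_{2i}$ in bidegree $(i,i)$, as claimed.
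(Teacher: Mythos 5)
Your setup for the lower bound is sound and is a genuinely different (and arguably cleaner) route than the paper's: the paper does not argue via fibre degree of the evaluation class $c$ on the universal graph bundle, but instead works group-cohomologically with Kawazumi's cocycle $k_0$ on $\bar{A}_n = \mathrm{Aut}(F_n)\ltimes F_n$, showing that the class $K_0^p\cdot p^*([k_0])$ in $H^{2p+1}(\bar{A}_n\ltimes H^*;H)$ fibre-integrates to a class $\bar{H}_p \in H^{2p}(\mathrm{Aut}(F_n)\ltimes H^*;\bZ)$ lifting $\bar{h}_p = \epsilon\pi_!([k_0]^{p+1}) \in H^p(\mathrm{Aut}(F_n);\wedge^p H)$, so that $\bar{h}_p$ is a permanent cycle. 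Your $\trf_{\pi'}(\tau^i)$ is (up to sign and a factor of $i!$) exactly this class $\bar{h}_i$. One small discrepancy: you invoke Conjecture \ref{conj:Collapse} to organise $E_\infty$, whereas the theorem is stated assuming only Conjecture \ref{conj}, and the paper's proof does not need the collapse statement.

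The genuine gap is the step you yourself flag as the main obstacle: showing that $\trf_{\pi'}(\tau^i)$ is \emph{nonzero} in $H^i(\mathrm{Aut}(F_n);\wedge^i H_\bQ)$, i.e.\ that $\chi_{2i}$ does not sit in filtration strictly greater than $i$. You propose to "resolve it by an explicit computation following the template of Kawazumi's analysis", but no such computation is given, and for $i>1$ this is precisely where all the content of the theorem lives (your observation that $i=1$ follows from the Satoh class is correct but does not propagate). The paper's resolution is not a direct computation on graphs: it restricts along the map $\mathcal{S}_{g,1}(\bC\bP^\infty)\to\mathcal{G}_{2g}^1(\bC\bP^\infty)$ to the mapping class group, where Kawazumi's identification $p!\,\bar{h}_p = -m_{1,p}$ holds, and then appeals to the theorem of \cite{ERW10} that $m_{1,p}$ detects $\kappa_{1,p}$ for surface bundles --- a result whose proof uses complex geometry and Hodge theory and is not available intrinsically on the graph side. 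So the non-vanishing is imported from the surface case rather than computed, and without this (or an equivalent input such as the algebraic independence of the $\bar{h}_p$ from \cite{KawazumiBraid}) your argument establishes only the inequality "filtration $\geq i$", not the equality claimed in the theorem.
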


Let us write $\mathcal{S}_{g,1}(\bC\bP^\infty)$ for the moduli space of surfaces of genus $g$ with a single boundary component and a map to $\bC\bP^\infty$, as defined in \cite{CM}. There is a natural map of fibrations
\begin{diagram}
\mathrm{map}_\partial(\Sigma_{g,1}, \bC\bP^\infty) & \rTo^{\simeq} & BH^1(F_n;\bZ)\\
\dTo & & \dTo\\
\mathcal{S}_{g,1}(\bC\bP^\infty) & \rTo &\mathcal{G}_{2g}^1(\bC\bP^\infty) \\
\dTo & & \dTo\\
\mathcal{S}_{g,1}(*) & \rTo &\mathcal{G}_{2g}^1(*)
\end{diagram}
where the lower map essentially sends a surface to its 1-skeleton, and it is easy to see that the class $\chi_{2i}$ is pulled back via the middle map to the class named $\kappa_{1, i}$ in \cite{ERW10}, as they are both defined by transferring powers of the first Chern class. 
The map of Leray--Serre spectral sequences for these two fibrations is then
\begin{diagram}
H^p(\Gamma_{g,1};\wedge^q H_\bQ) & \lTo & H^p(\mathrm{Aut}(F_{2g});\wedge^q H_\bQ)\\
\dImplies & & \dImplies\\
H^{*}(\mathcal{S}_{g,1}(\bC\bP^\infty);\bQ) & \lTo & H^{*}(\mathcal{G}_{2g}^1(\bC\bP^\infty);\bQ)\\
\dEq_{3* \leq 2g-2} & & \dEq_{2* \leq n-3}\\
\bQ[\kappa_{i,j} \vert i+j > 0, j \geq 0, i\geq -1] & \lTo & \bQ[\chi_2, \chi_4, ...]\\
\kappa_{1, j} & \lMapsto & \chi_{2j}
\end{diagram}

\begin{proof}[Proof of Theorem \ref{thm:LerayFilt}]
Kawazumi \cite{KawazumiMagnus} has defined certain cohomology classes $\bar{h}_p \in H^p(\mathrm{Aut}(F_n); H^{\otimes p})$, which under the projection $H^{\otimes p} \to \wedge^p H$ give classes we give the same name, $\bar{h}_p \in H^p(\mathrm{Aut}(F_n); \wedge^p H)$. These lie on the $E_2$-page of a spectral sequence converging to $H^{*}(\mathrm{Aut}(F_n) \ltimes H^*;\bZ)$. Following Kawazumi's construction in \cite{KawazumiMagnus} and his methods in \cite{KawazumiGoD}, we will produce a class in $H^{2p}(\mathrm{Aut}(F_n) \ltimes H^*;\bZ)$ that lifts $\bar{h}_p$, and hence see that $\bar{h}_p$ is a permanent cycle in this spectral sequence. 

Let $\bar{A}_n := \mathrm{Aut}(F_n) \ltimes F_n$ and $F_n \to \bar{A}_n \overset{\pi} \to \mathrm{Aut}(F_n)$ be the defining extension. Kawazumi constructs a cocycle $k_0 : \bar{A}_n \to H$ giving a class $[k_0] \in H^1(\bar{A}_n;H)$, which is necessarily a permanent cycle in the Leray--Hochschild--Serre spectral sequence for the extension $H^* \to \bar{A}_n \ltimes H^* \overset{p}\to \bar{A}_n$. Thus it detects a class $K_0 \in H^2(\bar{A}_n \ltimes H^*;\bZ)$. We define the cohomology class
$$K_0^p \cdot p^*([k_0]) \in H^{2p+1}(\bar{A}_n \ltimes H^*;H)$$
and its image under
$$H^{2p+1}(\bar{A}_n \ltimes H^*;H) \overset{\pi_!}\lra H^{2p}(\mathrm{Aut}(F_n) \ltimes H^*; H \otimes H^*) \overset{\epsilon}\lra H^{2p}(\mathrm{Aut}(F_n) \ltimes H^*;\bZ)$$
is defined to be $\bar{H}_p$. As $K_0$ is detected by $[k_0]$, it follows that $K_0^p \cdot p^*([k_0])$ is detected by $[k_0]^{p+1} \in H^{p+1}(\bar{A}_n;H^{\otimes p+1})$. Under fibre integration and the augmentation this defines of $\bar{h}_p$, as required.

Kawazumi shows that restricted to the mapping class group $\Gamma_{g,1}$ the class $p! \bar{h}_p$ becomes $-m_{1,p} \in H^p(\Gamma_{g,1};\wedge^p H)$ where $m_{i, j}$ are a family of classes he has previously defined in \cite{Kawazumi98}. In \cite{ERW10} Ebert and the author showed that $m_{1, p}$ detects the class $\kappa_{1,p} \in H^{2p}(\mathcal{S}_{g,1}(\bC\bP^\infty))$, and it thus follows that $-p! \bar{h}_p$ detects the class $\chi_{2p}$, and in particular that $\chi_{2p}$ has Leray filtration precisely $p$. A similar argument establishes the theorem for all monomials.
\end{proof}

This proof shows that our classes $\chi_{2p}$ lift Kawazumi's $\bar{h}_p$, and hence that
$$\bQ[\bar{h}_1, \bar{h}_2, \ldots] \lra H^*(\mathrm{Aut}(F_n);\wedge^* H_\bQ) \subset H^*(\mathrm{Aut}(F_n); H_\bQ^{\otimes *})$$
is injective in total degrees $2* \leq n-3$, i.e.\ that these classes are algebraically independent. This was proved by other means in \cite{KawazumiBraid}.

\vspace{2ex}

We now study the same situation for $\mathrm{Out}(F_n)$, where we have the spectral sequence (\ref{eq:sseqOut}) which we may write as
$$H^p(\mathrm{Out}(F_n);\wedge^* H_\bQ) \otimes \bQ[a] \Longrightarrow \bQ[\chi_2, \chi_4, \chi_6, ...] \quad \text{in total degrees $2* \leq n-5$}.$$
The calculation of Proposition \ref{prop:IntCalcOut} shows that $a \in \bar{E}_2^{0,2}$ is a rational permanent cycle, hence detects $\chi_2$ which must then have Leray filtration precisely $0$. We also have the fibration
$$\vee^n S^1 \lra B\mathrm{Aut}(F_n) \ltimes H^* \overset{\pi}\lra B\mathrm{Out}(F_n)\ltimes H^*,$$
which admits a Becker--Gottlieb transfer. Thus $\pi^*$ is injective and so for $i> 1$ let us write $\widetilde{\chi}_{2i} \in H^{2i}(\mathrm{Out}(F_n) \ltimes H^*;\bQ)$ for the unique class having the property that $\pi^*(\widetilde{\chi}_{2i}) = \chi_{2i}$, so $\trf^*_\pi(\chi_{2i}) = \widetilde{\chi}_{2i}$ also. If we write $\tilde{E}_r^{p,q}$ for the spectral sequence of the extension $\mathrm{Out}(F_n) \ltimes H^*$ in rational cohomology, we have an isomorphism $\tilde{E}_\infty^{*,*} \cong \mathcal{G}r\bQ[\widetilde{\chi}_4, \widetilde{\chi}_6, \widetilde{\chi}_8, ...]$ in the stable range, where $\mathcal{G}r$ denotes the associated graded to the Leray filtration. It remains to describe this filtration.

\begin{thm}[assuming Conjectures \ref{conj} and \ref{conj:Collapse}]\label{thm:LerayFiltOut}
With respect to the map $\mathcal{G}_n(\bC\bP^\infty) \to \mathcal{G}_n(*)$, $\chi_2$ has Leray filtration precisely $0$ and for $i > 1$ the element $\chi_{2i}$ has Leray filtration precisely $i$. More generally, the monomial $\chi_{2}^{a_1} \cdot \chi_{4}^{a_n} \cdots \chi_{2n}^{a_n}$ has Leray filtration precisely $2a_2 + \cdots + na_n$.
\end{thm}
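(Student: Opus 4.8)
The plan is to determine the Leray filtration of each polynomial generator separately, and then to read off the value on an arbitrary monomial by multiplicativity. Granting Conjecture~\ref{conj:Collapse}, the spectral sequence (\ref{eq:sseqOut}) (equivalently $\tilde{E}$) and its counterpart (\ref{eq:sseq}) for $\mathrm{Aut}(F_n)\ltimes H^*$ collapse rationally in the stable range; in each case the Leray filtration is then multiplicative, with associated graded a polynomial algebra on the leading terms of the generators. It is therefore enough to pin down the filtration of $\chi_2$ and of each $\widetilde{\chi}_{2i}$ for $i \geq 2$, and to note that products of the corresponding leading terms are nonzero --- automatic once the associated graded is identified with a polynomial ring on these classes. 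The case of $\chi_2$ is already handled in the discussion above: the class $a \in \bar{E}_2^{0,2}$ is a rational permanent cycle detecting $\chi_2$, so its leading term lies in filtration $0$ and $\chi_2$ has Leray filtration precisely $0$.

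For $i \geq 2$ I would bound the filtration of $\widetilde{\chi}_{2i}$ from both sides using the morphism $\pi$. The commuting square relating the projections $\mathrm{Aut}(F_n)\ltimes H^* \lra \mathrm{Aut}(F_n)$ and $\mathrm{Out}(F_n)\ltimes H^* \lra \mathrm{Out}(F_n)$, whose fibres are both $BH^*$ and whose horizontal map $\pi$ covers the quotient $q \colon \mathrm{Aut}(F_n) \to \mathrm{Out}(F_n)$, exhibits $\pi$ as a map of fibrations. Thus $\pi^*$ preserves the Leray filtration and, on the associated graded (the collapsed $E_2$-pages), induces the maps $q^* \colon H^p(\mathrm{Out}(F_n);\wedge^q H_\bQ) \to H^p(\mathrm{Aut}(F_n);\wedge^q H_\bQ)$. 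Since $\pi^*(\widetilde{\chi}_{2i}) = \chi_{2i}$ and $\pi^*$ cannot lower filtration, the filtration of $\widetilde{\chi}_{2i}$ is at most that of $\chi_{2i}$ on the $\mathrm{Aut}(F_n)$-side, which is precisely $i$ by Theorem~\ref{thm:LerayFilt}. This gives the upper bound.

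The lower bound will be the heart of the matter. Suppose $\widetilde{\chi}_{2i}$ had filtration $f < i$, with nonzero leading term $\lambda \in H^f(\mathrm{Out}(F_n);\wedge^{2i-f} H_\bQ)$. Applying $\pi^*$ and reading off the filtration-$f$ graded piece, the image of $\chi_{2i}$ there would be $q^*(\lambda)$; but $\chi_{2i}$ has filtration $i > f$, so this piece vanishes and hence $q^*(\lambda) = 0$. The Becker--Gottlieb transfer for $\vee^n S^1 \to B\mathrm{Aut}(F_n) \to B\mathrm{Out}(F_n)$, whose fibre has Euler characteristic $1-n \neq 0$, gives $\trf^* \circ q^* = (1-n)\cdot \mathrm{id}$ on $H^*(\mathrm{Out}(F_n);\wedge^* H_\bQ)$, so $q^*$ is rationally injective; therefore $\lambda = 0$, a contradiction, and the filtration of $\widetilde{\chi}_{2i}$ is exactly $i$. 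Feeding the generator values into multiplicativity, the monomial $\chi_2^{a_1}\chi_4^{a_2}\cdots\chi_{2n}^{a_n}$ acquires filtration $0\cdot a_1 + 2a_2 + \cdots + n a_n$, as claimed.

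The hard part is exactly this lower bound: I must exclude a drop in filtration of the sort that genuinely occurs at $i = 1$. Everything hinges on the rational injectivity of $q^*$, itself a consequence of the transfer; the reason $i=1$ is exceptional is not a failure of injectivity but the structural fact that $\chi_2$ is carried on the $\mathrm{Out}(F_n)$-side by the class $a$ pulled back from $\bC\bP^\infty$, in filtration $0$, with no room for a class $\widetilde{\chi}_2$ since $H^2(\mathrm{Out}(F_n)\ltimes H^*;\bQ)$ vanishes stably. A further point to watch is the compatibility of the two collapse assertions of Conjecture~\ref{conj:Collapse} with $\pi^*$, which is what permits $\mathrm{gr}(\pi^*)$ to be identified with $q^*$ on honest cohomology groups rather than merely on subquotients.
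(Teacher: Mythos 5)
Your argument is correct and is essentially the paper's own proof: both rest on the collapse of the two spectral sequences, the rational (split) injectivity of $H^p(\mathrm{Out}(F_n);\wedge^q H_\bQ) \to H^p(\mathrm{Aut}(F_n);\wedge^q H_\bQ)$ coming from the Becker--Gottlieb transfer, and Theorem \ref{thm:LerayFilt} on the $\mathrm{Aut}(F_n)$-side. You have merely unpacked the paper's one-line deduction that the induced filtration agrees with the Leray filtration into the explicit upper and lower bounds on $\mathrm{filt}(\widetilde{\chi}_{2i})$, with the $i=1$ case handled by the class $a$ exactly as in the paper's preceding discussion.
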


We will make use of the following lemma, which follows from Kawazumi \cite[Theorem 7.1]{KawazumiMagnus}. It also follows from general principles: the Becker--Gottlieb transfer with local coefficients \cite[Lemma A.1]{ERW10}.

\begin{lem}
For any $\bZ[\tfrac{1}{n-1}][\mathrm{Out}(F_n)]$-module $M$, the map
$$H^*(\mathrm{Out}(F_n);M) \lra H^*(\mathrm{Aut}(F_n);M)$$
is split injective.
\end{lem}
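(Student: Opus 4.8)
The plan is to exhibit an explicit splitting of $\pi^*$ via the Becker–Gottlieb transfer, reusing the transfer machinery already in play throughout this note. First I would recall that the defining extension $F_n \to \mathrm{Aut}(F_n) \to \mathrm{Out}(F_n)$ is classified by a fibration
$$\vee^n S^1 \lra B\mathrm{Aut}(F_n) \overset{\pi}\lra B\mathrm{Out}(F_n)$$
whose fibre is the finite $1$-complex $K(F_n,1) \simeq \vee^n S^1$. Given an $\mathrm{Out}(F_n)$-module $M$, one restricts it along $\pi$ to an $\mathrm{Aut}(F_n)$-module, and the map in question is then the induced map $\pi^*$ on cohomology with these local coefficients.

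Next I would invoke the Becker–Gottlieb transfer. Because the fibre is a finite complex, hence finitely dominated, the fibration carries a transfer; the essential point is that it persists with nontrivial local coefficients. Using \cite[Lemma A.1]{ERW10}, I would produce a map
$$\trf^*_\pi : H^*(\mathrm{Aut}(F_n);M) \lra H^*(\mathrm{Out}(F_n);M)$$
valid for any $M$ pulled back from the base, together with the composition formula $\trf^*_\pi \circ \pi^* = \chi(\vee^n S^1)\cdot \mathrm{id}$. Since $\chi(\vee^n S^1) = 1-n$, this reads $\trf^*_\pi \circ \pi^* = (1-n)\cdot\mathrm{id}$ on $H^*(\mathrm{Out}(F_n);M)$.

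Finally I would use the hypothesis that $M$ is a $\bZ[\tfrac{1}{n-1}]$-module: then $1-n$ acts invertibly on $M$, hence on $H^*(\mathrm{Out}(F_n);M)$, so $\tfrac{1}{1-n}\,\trf^*_\pi$ is a left inverse to $\pi^*$ and $\pi^*$ is split injective. The one substantive step is the second: establishing the transfer and its Euler-characteristic property in the presence of a genuinely twisted coefficient system. This is exactly what the cited lemma of Ebert and the author supplies, and it can alternatively be read off from Kawazumi \cite[Theorem 7.1]{KawazumiMagnus}; everything else is formal bookkeeping.
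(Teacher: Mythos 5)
Your proof is correct and is exactly the argument the paper has in mind: the paper proves this lemma simply by citing the Becker--Gottlieb transfer with local coefficients (\cite[Lemma A.1]{ERW10}, or alternatively Kawazumi), and your write-up just fills in the standard details --- the fibre $\vee^n S^1$ has Euler characteristic $1-n$, so $\trf^*_\pi \circ \pi^* = (1-n)\cdot\mathrm{id}$, which is invertible on a $\bZ[\tfrac{1}{n-1}]$-module.
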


\begin{proof}[Proof of Theorem \ref{thm:LerayFiltOut}]
As the spectral sequences (\ref{eq:sseq}) and (\ref{eq:sseqOut}) are both assumed to collapse, the filtration on $H^*(\mathrm{Aut}(F_n) \ltimes H^*;\bQ)$ and $H^*(\mathrm{Out}(F_n) \ltimes H^*;\bQ)$ come from the $E_2$-pages. As $H^p(\mathrm{Out}(F_n);\wedge^q H_\bQ) \to H^p(\mathrm{Aut}(F_n);\wedge^q H_\bQ)$ is split injective by the above lemma, it follows that the induced filtration on $H^*(\mathrm{Out}(F_n) \ltimes H^*;\bQ) \hookrightarrow H^*(\mathrm{Aut}(F_n) \ltimes H^*;\bQ)$ agrees with the Leray filtration.
\end{proof}

We now turn to the modules $\Sym^q H$, where we have the rational calculation
$$H^*(\mathcal{G}_\infty^1(K(\bZ,3));\bQ) \cong H^*(Q_0(K(\bZ,3)_+);\bQ) \cong \Lambda[a].$$
Observing that $E_2^{1,2}=H^1(\mathrm{Aut}(F_n);\Sym^1 H_\bQ) = H^1(\mathrm{Aut}(F_n); H_\bQ) = \bQ$, we see that the element $a \in H^3(\mathcal{G}_n^1(K(\bZ,3));\bQ)$ must be detected in $E_\infty^{1,2}$. Thus away from this position the spectral sequence (\ref{eq:sseqSym}) converges rationally to zero. In the spectral sequence (\ref{eq:sseqSymOut}), we see that $a \in E_2^{3,0}$ is a permanent cycle for $n \geq 2$ as in the proof of Proposition \ref{prop:IntCalcOut}. Thus this detects the class $a \in H^3(\mathcal{G}_n(K(\bZ,3));\bQ)$, and we deduce that away from this position the spectral sequence (\ref{eq:sseqSymOut}) converges rationally to zero.

\subsection{A conjectural description of the stable rational cohomology}
Putting these calculations together with Conjecture \ref{conj} leads to the following rather surprising (to the author at least) calculation.
\begin{cor}[assuming Conjectures \ref{conj} and \ref{conj:Collapse}]
In degrees $2* \leq n-2q-3$ we have
$$
H^*(\mathrm{Aut}(F_n); \wedge^q H_\bQ) = 
\begin{cases}
0 & \text{if $* \neq q$}\\
\bQ^{\rho(q)} & \text{if $* = q$}
\end{cases}
$$
where $\rho(q)$ denotes the number of partitions of $q$. In degrees $2* \leq n-2q-5$ we have
$$
H^*(\mathrm{Out}(F_n); \wedge^q H_\bQ) = 
\begin{cases}
0 & \text{if $* \neq q$}\\
\bQ^{\bar{\rho}(q)} & \text{if $* = q$}
\end{cases}
$$
where $\bar{\rho}(q)$ denotes the number of partitions of $q$ into pieces none of which are $1$. In degrees $2* \leq n-4q-3$ we have
$$
H^*(\mathrm{Aut}(F_n); \Sym^q H_\bQ) = 
\begin{cases}
0 & \text{if $q \neq 1$}\\
\bQ & \text{if $q=1$ and $*=1$.}
\end{cases}
$$
In degrees $2* \leq n-4q-5$ we have
$$H^*(\mathrm{Out}(F_n); \Sym^q H_\bQ) = 0.$$
\end{cor}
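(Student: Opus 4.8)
The plan is to read off the vanishing directly from the collapse of the spectral sequence (\ref{eq:sseqSymOut}), exactly as was done for $\mathrm{Aut}(F_n)$ just above. The fibre $\mathrm{map}(\vee^n S^1,K(\bZ,3))\simeq K(\bZ,3)\times K(H^*,2)$ has rational cohomology $\Sym^* H\otimes\Lambda[a]$, with $\Sym^k H$ placed in fibre degree $2k$ and $a$ in degree $3$. Hence the only nonzero entries on the $E_2$-page are $\bar{E}_2^{p,2k}=H^p(\mathrm{Out}(F_n);\Sym^k H_\bQ)$ and $\bar{E}_2^{p,2k+3}=H^p(\mathrm{Out}(F_n);\Sym^k H_\bQ)\cdot a$. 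In particular the group $H^p(\mathrm{Out}(F_n);\Sym^q H_\bQ)$ that we wish to compute occupies bidegree $(p,2q)$, of total degree $p+2q$.

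First I would assemble the two standing inputs. By Conjecture \ref{conj} together with the integral homology equivalence $\mathcal{G}^1_\infty(X)\to Q_0(X_+)$, in total degrees $2*\leq n-5$ the abutment is $H^*(\mathcal{G}_n(K(\bZ,3));\bQ)\cong H^*(Q_0(K(\bZ,3)_+);\bQ)\cong\Lambda[a]$, which is one-dimensional in each of the total degrees $0$ and $3$ and zero otherwise. By Conjecture \ref{conj:Collapse} the spectral sequence (\ref{eq:sseqSymOut}) collapses rationally in total degrees $2*\leq n-5$, so there $\bar{E}_\infty=\bar{E}_2$, and the $E_2$-page is identified with the associated graded of this abutment. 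The key bookkeeping point is that the bidegree $(p,2q)$ lies in total degrees $2*\leq n-5$ precisely when $2(p+2q)\leq n-5$, i.e.\ when $2p\leq n-4q-5$, which is exactly the range asserted in the statement.

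With these in hand the conclusion is immediate away from one bidegree. For $q\geq 1$ the total degree $p+2q$ of $\bar{E}_2^{p,2q}=H^p(\mathrm{Out}(F_n);\Sym^q H_\bQ)$ is at least $2$, and since the associated graded is concentrated in total degrees $0$ and $3$, this group vanishes unless $p+2q=3$, that is, unless $(p,q)=(1,1)$. It therefore remains only to kill the single exceptional entry $\bar{E}_2^{1,2}=H^1(\mathrm{Out}(F_n);H_\bQ)$. Here I would compare dimensions in total degree $3$: the associated graded there is $\bar{E}_2^{3,0}\oplus\bar{E}_2^{1,2}\oplus\bar{E}_2^{0,3}=H^3(\mathrm{Out}(F_n);\bQ)\oplus H^1(\mathrm{Out}(F_n);H_\bQ)\oplus\bQ a$, and since the abutment is one-dimensional in total degree $3$ and is already carried by the permanent cycle $a\in\bar{E}_\infty^{0,3}$, both $H^3(\mathrm{Out}(F_n);\bQ)$ and $H^1(\mathrm{Out}(F_n);H_\bQ)$ must vanish. (The vanishing of $H^1(\mathrm{Out}(F_n);H_\bQ)$ also follows from Proposition \ref{prop:IntCalcOut}, a reassuring independent check.) This gives $H^p(\mathrm{Out}(F_n);\Sym^q H_\bQ)=0$ for every $q\geq 1$ throughout $2*\leq n-4q-5$.

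The main obstacle is entirely localised at total degree $3$: once the collapse and the computation of the abutment are granted, the only genuine verification is that the degree-$3$ cohomology is exhausted by the class $a$, rather than hiding a contribution from $H^1(\mathrm{Out}(F_n);H_\bQ)$ — precisely the content of the permanent-cycle statement for (\ref{eq:sseqSymOut}) and of Proposition \ref{prop:IntCalcOut}. As a robustness check, for $q\geq 2$ one may bypass the spectral sequence entirely: applying the transfer splitting $H^*(\mathrm{Out}(F_n);\Sym^q H_\bQ)\hookrightarrow H^*(\mathrm{Aut}(F_n);\Sym^q H_\bQ)$ (valid since $\Sym^q H_\bQ$ is a $\bQ[\mathrm{Out}(F_n)]$-module) reduces the vanishing to the already-established statement for $\mathrm{Aut}(F_n)$, leaving only the case $q=1$ to be treated as above.
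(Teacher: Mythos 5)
The part of the corollary you do address --- the vanishing of $H^*(\mathrm{Out}(F_n);\Sym^q H_\bQ)$ in degrees $2*\leq n-4q-5$ --- is argued correctly and in the same way the paper intends: the collapse of (\ref{eq:sseqSymOut}) identifies the $E_2$-page with the associated graded of $\Lambda[a]$, the bookkeeping $2(p+2q)\leq n-5 \Leftrightarrow 2p\leq n-4q-5$ is right, and the one exceptional entry $\bar{E}_2^{1,2}=H^1(\mathrm{Out}(F_n);H_\bQ)$ is correctly killed either by counting dimensions in total degree $3$ (since $a$ already exhausts $\bar{E}_\infty^{0,3}$) or by quoting Proposition \ref{prop:IntCalcOut}. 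The transfer reduction for $q\geq 2$ is also a valid shortcut.

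However, the statement has four parts and you have proved only the fourth (taking the third, for $\mathrm{Aut}(F_n)$ with $\Sym^q$, as ``already established just above''). The two exterior-power computations --- $H^*(\mathrm{Aut}(F_n);\wedge^q H_\bQ)\cong\bQ^{\rho(q)}$ concentrated in degree $q$, and $H^*(\mathrm{Out}(F_n);\wedge^q H_\bQ)\cong\bQ^{\bar\rho(q)}$ concentrated in degree $q$ --- are not addressed at all, and they are the substantive claims. They do not follow from collapse plus knowledge of the abutment alone: the abutment $\bQ[\chi_2,\chi_4,\ldots]$ of (\ref{eq:sseq}) is nonzero in every even total degree, so one must additionally know \emph{in which Leray filtration} each monomial $\chi_2^{a_1}\cdots\chi_{2n}^{a_n}$ is detected. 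That is exactly the content of Theorems \ref{thm:LerayFilt} and \ref{thm:LerayFiltOut} (proved via Kawazumi's classes $\bar{h}_p$ and the comparison with $\kappa_{1,p}$ on mapping class groups), which give that the monomial has filtration $\sum_i i a_i$ for $\mathrm{Aut}$, respectively $\sum_{i\geq 2} i a_i$ for $\mathrm{Out}$; counting monomials landing in bidegree $(q,q)$ then yields $\rho(q)$, respectively $\bar\rho(q)$ (partitions with no part equal to $1$, since $\chi_2$ contributes filtration $0$ in the $\mathrm{Out}$ case). Note also that the ranges for these parts are $2*\leq n-2q-3$ and $2*\leq n-2q-5$ because $\wedge^q H$ sits in fibre degree $q$ rather than $2q$. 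Without invoking these filtration results your method cannot distinguish, say, a contribution to $H^{q-1}(\mathrm{Aut}(F_n);\wedge^{q}H_\bQ)$ from one to $H^{q+1}(\mathrm{Aut}(F_n);\wedge^{q-1}H_\bQ)$ in the same total degree, so the first two claims remain unproved.
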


\bibliographystyle{plain}
\bibliography{MainBib}

\def\cprime{$'$}
\begin{thebibliography}{10}

\bibitem{BG}
J.~C. Becker and D.~H. Gottlieb.
\newblock The transfer map and fiber bundles.
\newblock {\em Topology}, 14:1--12, 1975.

\bibitem{BridVogt}
Martin Bridson and Karen Vogtmann.
\newblock Abelian covers of graphs and maps between outer automorphism groups
  of free groups.
\newblock arXiv:1007.2598, 2010.

\bibitem{CM}
Ralph Cohen and Ib~Madsen.
\newblock Surfaces in a background space and the homology of mapping class
  group.
\newblock {\em Proc. Symp. Pure Math.}, 80(1):43--76, 2009.

\bibitem{DwyerHomStab}
W.~G. Dwyer.
\newblock Twisted homological stability for general linear groups.
\newblock {\em Ann. of Math. (2)}, 111(2):239--251, 1980.

\bibitem{ERW10}
Johannes Ebert and Oscar Randal-Williams.
\newblock Stable cohomology of the extended mapping class group and the
  universal {P}icard variety.
\newblock arXiv:1012.0901, 2010.

\bibitem{galatius-2006}
S{\o}ren Galatius.
\newblock Stable homology of automorphism groups of free groups.
\newblock arXiv:math/0610216, to appear in Ann. Math., 2006.

\bibitem{GR-W}
S{\o}ren Galatius and Oscar Randal-Williams.
\newblock Monoids of moduli spaces of manifolds.
\newblock {\em Geom. Topol.}, 14(3):1243--1302, 2010.

\bibitem{HV}
Allen Hatcher and Karen Vogtmann.
\newblock Homology stability for outer automorphism groups of free groups.
\newblock {\em Algebr. Geom. Topol.}, 4:1253--1272 (electronic), 2004.

\bibitem{Kawazumi98}
Nariya Kawazumi.
\newblock A generalization of the {M}orita-{M}umford classes to extended
  mapping class groups for surfaces.
\newblock {\em Invent. Math.}, 131(1):137--149, 1998.

\bibitem{KawazumiMagnus}
Nariya Kawazumi.
\newblock Cohomological aspects of {M}agnus expansions.
\newblock arXiv:math/0505497, 2006.

\bibitem{KawazumiGoD}
Nariya Kawazumi.
\newblock On the stable cohomology algebra of extended mapping class groups for
  surfaces.
\newblock In {\em Groups of diffeomorphisms}, volume~52 of {\em Adv. Stud. Pure
  Math.}, pages 383--400. Math. Soc. Japan, Tokyo, 2008.

\bibitem{KawazumiBraid}
Nariya Kawazumi.
\newblock Twisted {M}orita-{M}umford classes on braid groups.
\newblock In {\em Groups, homotopy and configuration spaces}, volume~13 of {\em
  Geom. Topol. Monogr.}, pages 293--306. Geom. Topol. Publ., Coventry, 2008.

\bibitem{R-WResolution}
Oscar Randal-Williams.
\newblock Resolutions of moduli spaces.
\newblock arXiv:0909.4278, 2009.

\bibitem{Satoh}
Takao Satoh.
\newblock Twisted first homology groups of the automorphism group of a free
  group.
\newblock {\em J. Pure Appl. Algebra}, 204(2):334--348, 2006.

\bibitem{Satoh2}
Takao Satoh.
\newblock Twisted second homology groups of the automorphism group of a free
  group.
\newblock {\em J. Pure Appl. Algebra}, 211(2):547--565, 2007.

\end{thebibliography}

\end{document}